\newcommand\Z{{\mathbb Z}}
\newcommand\C{{\mathbb C}}
\newcommand\Ac{{\mathcal A}}
\newcommand\Cc{{\mathcal C}}
\newcommand\Dcal{{\mathcal D}}
\newcommand\Ec{{\mathcal E}}
\renewcommand\L{{\mathcal L}}
\newcommand\M{{\mathcal M}}
\newcommand\Oc{{\mathcal O}}
\newcommand\Tc{{\mathcal T}}
\newcommand\U{{\mathcal U}}
\newcommand\Stab{\operatorname{Stab}}
\newcommand\End{\operatorname{End}}
\newcommand\Endc{\operatorname{\mathcal E\it{nd}}}
\newcommand\Hom{\operatorname{Hom}}
\newcommand\Homc{\operatorname{\mathcal H\it{om}}}
\newcommand\Diffc{\operatorname{\mathcal D\it{iff}}}
\newcommand\Der{\operatorname{Der}}
\newcommand\Det{\operatorname{Det}}
\newcommand\di{\operatorname{dim}}
\newcommand\Tg{\operatorname{T}}
\newcommand\Hcoh{\operatorname{H}}
\newcommand\Tr{\operatorname{Tr}}
\newcommand\Kr{\operatorname{Kr}}
\newcommand\Res{\operatorname{Res}}
\newcommand\iso{\kern.35em{\raise3pt\hbox
{$\sim$}\kern-1.1em\to}\kern.3em}
\newcommand\Gr{\operatorname{Gr}}
\newcommand\Gl{\operatorname{Gl}}
\newcommand\SGl{\operatorname{SGl}}
\newcommand\gl{\operatorname{\mathfrak gl}}
\newcommand\G{\operatorname{G}}
\newcommand\Ide{\operatorname{Id}}
\newcommand{\sgl}{\mathfrak{sgl}}
\newcommand{\g}{\mathfrak{g}}
\newcommand{\lgl}{\mathfrak{gl}}
\newcommand{\lsl}{\mathfrak{sl}}
\theoremstyle{plain}
\newtheorem{thm}{Theorem}[section]
\newtheorem{prop}[thm]{Proposition}
\theoremstyle{definition}
\newtheorem{defin}[thm]{Definition}
\theoremstyle{remark}
\newtheorem{remark}[thm]{Remark}
\numberwithin{equation}{section}
\begin{document}

\title[Uniformization of the moduli space of pairs $(X,E)$]{Uniformization of the \\ moduli space of pairs consisting of \\
a curve and a vector bundle.}

\author[\quad G\'omez Gonz\'alez, Hern\'andez Serrano, Mu\~noz Porras and Plaza Mart\'{\i}n]{E. G\'omez Gonz\'alez \\ D. Hern\'andez Serrano \\ J. M. Mu\~noz Porras \\  F. J. Plaza
Mart\'{\i}n}

\address{Departamento de Matem\'aticas, Universidad de
Salamanca,  Plaza de la Merced 1-4
        \\
        37008 Salamanca. Spain.
        \\
         Tel: +34 923294460. Fax: +34 923294583}
\address{IUFFYM. Instituto Universitario de F\'{\i}sica Fundamental y Matem\'aticas, Universidad de Salamanca, Plaza de la Merced s/n\\ 37008 Salamanca. Spain.}
\date\today
\thanks{
       {\it 2010 Mathematics Subject Classification}: 14H60, 14D21 (Primary)
     22E65, 22E67, 22E47 (Secondary). \\
\indent {\it Key words}: moduli of vector bundles, Virasoro algebra, Kac-Moody algebra, infinite Grassmannians.   \\
\indent This work is partially supported by the research contracts
MTM2009-11393 of Ministerio de Ciencia e Innovaci\'{o}n and  SA112A07 of JCyL.  \\
}
\email{esteban@usal.es}
\email{dani@usal.es}
\email{jmp@usal.es}
\email{fplaza@usal.es}

\begin{abstract}
This paper is devoted to the study of the uniformization of the moduli space of pairs $(X,E)$ consisting of an algebraic curve and a vector bundle on it. For this goal, we study the moduli space of $5$-tuples $(X,x,z,E,\phi)$, consisting of a genus $g$ curve, a point on it, a local coordinate, a rank $n$ degree $d$ vector bundle and a formal trivialization of the bundle at the point. A group acting on it is found and it is shown that it acts (infinitesimally) transitively on this moduli space and an identity between central extensions of its Lie algebra is proved. Furthermore, a geometric explanation for that identity is offered.
\end{abstract}
\maketitle


\section{Introduction.}\qquad

Uniformization of geometric objects, which is of special mathematical relevance on its own,  also has significant consequences in other topics such as mathematical physics. Let us illustrate this  by mentioning a couple of cases. The Uniformization Theorem of Riemann Surfaces was a key ingredient in Segal's approach to CFT (see the notion of {\sl annuli} in~\cite{SegalCFT}). Another example is the construction of the moduli space of vector bundles on an algebraic curve as a double coset, which has been applied in a variety of problems such as a proof of the Verlinde formula (\cite{BL}) and the development of the geometric theory of conformal blocks (e.g.~\cite{FrBen2}).

The infinitesimal study of uniformization has also led to connections of moduli theory, integrable systems and representation theory. Indeed, if one finds a group acting on a moduli space such that the action is infinitesimally transitive, then the action of the Lie algebra can help us to the study the properties of that moduli space; further, a link with the representation theory of infinite Lie algebras also becomes apparent. This has been the case in theory of loop groups, Virasoro algebra, Kac-Moody algebras, etc. As instances of this fruitful approach let us cite the interplay between the KP hierarchy and the Schottky problem (\cite{Mu1, Shiota}) and the infinitesimal version of Mumford's formula (\cite{MP2}).

Following the ideas of the above digression, this paper is devoted to the study of the uniformization of the moduli space of pairs $(X,E)$ consisting of an algebraic curve and a vector bundle on it. A group acting on it is found and it is shown that it acts (infinitesimally) transitively on this moduli space and an identity between central extensions of its Lie algebra is proved. Furthermore, a geometric explanation for that identity is offered.

Let us briefly review the contents of the paper. Following the spirit of \cite{MP2}, the group $\SGl_{\C((z))}(V)$ of semilinear automorphisms of $V=\C((z))^n$ is considered. Infinitesimal study of this group shows that its Lie algebra, $\sgl_{\C((z))}(V)$, is isomorphic to the Lie algebra of first-order differential operators (with scalar symbol) $\Dcal^1_{\C((z))/\C}(V)$. The relevance of this group lies in the fact that the central extension of its Lie algebra associated with its action on the Sato Grassmannian is the semidirect product of an affine Kac-Moody algebra and the Virasoro algebra. It is worth mentioning that this Lie algebra has already appeared in the literature (e.g. \cite{MR939049,GO}).

Following this, a study of certain central extensions of that Lie algebra is carried out and, as the first main result of the paper, we demonstrate an explicit identity (see Theorem~\ref{thm:LocalMumVB2}) among the cocycles associated with these central extensions:
    \begin{equation}
    \label{eq:formulaIntro}
    c_{n,\beta}\,=\,
    \beta c_{n,1} + (1-\beta)c_{n,0} + 6n\beta(\beta-1)vir_1
    \end{equation}
which can be thought of as a generalization of the infinitesimal version of the Mumford formula for the case of the moduli space of pairs $(X,E)$ ($X$ being a curve and $E$ a rank $n$ vector bundle on it). It should be noted that the Lie algebra of the group $\SGl_{\C((z))}(V)$ is closely related to Atiyah algebras and $\mathcal W$-algebras (these algebras have appeared in various models of two-dimensional quantum field theory and integrable systems, see for example \cite{BeilinsonSchectman,FKRW, AFMO}).

The following section is devoted to offering a geometric description of the group $\SGl_{\C((z))}(V)$. Let $\U_g^{\infty}(n,d)$ denote the moduli space of $5$-tuples $(X,x,z,E,\phi)$, consisting on a genus $g$ curve, a point on it, a local coordinate, a rank $n$ degree $d$ vector bundle and a formal trivialization of the bundle at the point, respectively. Thanks to the techniques of the Krichever map and the Sato Grassmannian, the tangent space to $\U_g^{\infty}(n,d)$ is described in cohomological terms as well as in terms of the geometry of the Sato Grassmannian. Then, the second main result of the paper can be shown; namely, that the group $\SGl_{\C((z))}(V)$ acts on $\U_g^{\infty}(n,d)$ and that this action is locally transitive  or, in other words, that the space $\U_g^{\infty}(n,d)$ is infinitesimally a homogenous space for the group $\SGl_{\C((z))}(V)$ (see Theorem~\ref{thm:SGLgeneratesUinfty}).

Finally,  our third main result (see Theorem~\ref{thm:GeometricFormulaLineBundles}) provides a geometric explanation for  formula (\ref{eq:formulaIntro}). In fact, given a family of smooth curves without automorphisms and a relative semistable rank $n$ degree $d$ vector bundle on it,  we succeed at building line bundles on it such that they  coincide infinitesimally with the central extensions of \S\ref{loc:s:central} and such that a relation analogous to equation (\ref{eq:formulaIntro}) holds. This relation resembles that obtained in \cite{Sch}. Moreover, if there exists a universal vector bundle on the universal curve (\cite{MestranoRamanan}), then our construction can be applied to it and an identity on the Picard group of the moduli space of vector bundles on the universal curve is obtained (\cite{Kou}).

We shall work over the field $\C$ of complex numbers, although all results are valid over an arbitrary algebraically closed field of characteristic $0$. When no confusion arises, and for the sake of clarity, we shall deal with rational points (i.e.,  $\C$-valued points).

\section{Group of semilinear automorphisms of $\C((z))^n$.}\label{loc:s:SGl}\qquad

\subsection{The group and its Lie algebra.}\qquad

Let us write $V=\C((z))^n$  as $n$-copies of the field of Laurent series and $V_+=\C[[z]]^n$ as $n$-copies of the formal power series ring. Let $\Gl_{\C}(V)$ be the restricted linear group and $\Gl_{\C((z))}(V)$ the $\C((z))$-linear group of $V=\C((z))^n$ (see \cite{SW,PS}). Let us denote by $\G$ the formal group scheme of automorphisms of $\C$-algebras of $\C((z))$ (see \cite{MP2}).

\begin{defin}\label{gen:d:SGl}
 We define the group functor  $\SGl_{\C((z))}(V)$ of semilinear automorphisms of $V$ as the subfunctor of $\Gl_{\C}(V)$ and its rational points are $\C$-linear automorphisms:
    $$\gamma \colon V\iso V 
    \, , $$
for which there exists an automorphism of $\C$-algebras of $\C((z))$, $g\in \G$, satisfying:
    \begin{equation}\label{loc:e:SGldef}
      \gamma (z\cdot v)=g(z)\cdot \gamma(v)
    \, .
    \end{equation}
\end{defin}

Following the ideas of \cite{MP2}, the definition of this group functor for points with values in any $\C$-scheme can be given.
\begin{prop}\label{gen:p:SGl}
One has a canonical exact sequence of group functors:
    $$0\to \Gl_{\C((z))}(V) \to \SGl_{\C((z))}(V) \to \G \to 0\, .$$
Moreover:
    $$\SGl_{\C((z))}(V)=\Gl_{\C((z))}(V)\rtimes \G
    \, .
    $$
\end{prop}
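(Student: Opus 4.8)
The plan is to establish the two assertions of Proposition~\ref{gen:p:SGl} in sequence: first the exactness of the sequence, then the splitting that promotes it to a semidirect product. For the exact sequence, I would define the map $\SGl_{\C((z))}(V)\to\G$ sending a semilinear automorphism $\gamma$ to the algebra automorphism $g$ appearing in~\eqref{loc:e:SGldef}. The first task is to check this is well defined, i.e. that $g$ is uniquely determined by $\gamma$. This follows because the relation $\gamma(z\cdot v)=g(z)\cdot\gamma(v)$ forces $g(z)$ to be computable from $\gamma$: picking any $v$ with $\gamma(v)\neq 0$, the element $g(z)$ is determined as the ratio encoding how $\gamma$ twists multiplication by $z$, and since $\G$ consists of automorphisms of $\C((z))$ determined by the image of $z$, this pins down $g$ uniquely. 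Functoriality and the homomorphism property $g_{\gamma_1\gamma_2}=g_{\gamma_1}\circ g_{\gamma_2}$ are then routine from composing the defining relations.

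Next I would identify the kernel. An element $\gamma$ lies in the kernel exactly when its associated $g$ is the identity, i.e. when $\gamma(z\cdot v)=z\cdot\gamma(v)$ for all $v$; this is precisely the condition that $\gamma$ be $\C((z))$-linear rather than merely semilinear, so the kernel is $\Gl_{\C((z))}(V)$. Surjectivity onto $\G$ is the substantive point: given $g\in\G$, I must produce a semilinear $\gamma$ inducing it. The natural candidate is to let $g$ act coordinatewise on $V=\C((z))^n$, i.e. $\gamma=g^{\oplus n}$, applying the field automorphism to each Laurent-series entry. One checks directly that this $\gamma$ is a $\C$-linear automorphism of $V$ satisfying~\eqref{loc:e:SGldef}, hence surjectivity holds. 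This same construction furnishes the splitting.

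For the semidirect product decomposition, I would exhibit the map $s\colon\G\to\SGl_{\C((z))}(V)$, $s(g)=g^{\oplus n}$, and verify it is a group homomorphism splitting the projection, i.e. that the composite $\G\xrightarrow{s}\SGl_{\C((z))}(V)\to\G$ is the identity. Because $s$ is a section and the kernel $\Gl_{\C((z))}(V)$ is normal (being a kernel), the standard splitting lemma for group extensions yields $\SGl_{\C((z))}(V)=\Gl_{\C((z))}(V)\rtimes\G$, with the action of $\G$ on the normal subgroup given by conjugation through $s$. I would record this conjugation action explicitly, since it is the data that distinguishes the semidirect product from a direct product: for $g\in\G$ and $A\in\Gl_{\C((z))}(V)$, conjugating amounts to applying $g$ to the matrix entries of $A$.

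The main obstacle I anticipate is not any single hard step but rather the care required to carry out all of this at the level of \emph{group functors} rather than merely on rational points, as Definition~\ref{gen:d:SGl} phrases things only for rational points while the proposition is stated functorially. The remark following the definition asserts that the functor extends to points with values in arbitrary $\C$-schemes following~\cite{MP2}, so the real work is to ensure that the well-definedness of $\gamma\mapsto g$, the identification of the kernel, and the coordinatewise splitting $g\mapsto g^{\oplus n}$ are all natural in the test scheme $S$. Once these constructions are checked to commute with base change, the exactness and the splitting follow formally from the pointwise arguments; the uniqueness of $g$ (guaranteeing the projection is a genuine morphism of functors) is the point deserving the most attention.
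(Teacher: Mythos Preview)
Your proposal is correct and carries out in detail what the paper dispatches in one sentence. The paper's proof simply observes that $\G$ acts on $\Gl_{\C((z))}(V)$ by conjugation (via the coordinatewise embedding $g\mapsto g^{\oplus n}$ you also describe) and then cites \cite[Chapter IV.6]{SB} for the general formalism of semidirect products of group functors, recording the resulting composition law $(\gamma_1,g_1)\ast(\gamma_2,g_2)=(c_{g_2}(\gamma_1)\circ\gamma_2,\,g_1\circ g_2)$ with $c_{g_2}(\gamma_1)=g_2^{-1}\circ\gamma_1\circ g_2$. Your approach instead builds the projection $\gamma\mapsto g$ and its section $g\mapsto g^{\oplus n}$ by hand, verifies exactness directly, and then invokes the splitting lemma; this is the same content unpacked, with the added virtue of being self-contained and of making explicit the functorial well-definedness issue that the paper leaves to the cited reference.
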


\begin{proof}
Bearing in mind that $\G$ acts on $\Gl_{\C((z))}(V)$ by conjugation, the result follows from \cite[Chapter IV.6]{SB}. In particular, the composition law in $\SGl_{\C((z))}(V)$ is explicitly given by:
    $$(\gamma_1,g_1)\ast (\gamma_2,g_2)=(c_{g_2}(\gamma_1)\circ \gamma_2,g_1 \circ g_2)\,,$$
where $c_{g_2}(\gamma_1)=g_2^{-1}\circ \gamma_1 \circ g_2$ is the action of $\G$ on $\Gl_{\C((z))}(V)$ by conjugation.
\end{proof}

 Henceforth we shall denote by $\g$, $\gl_{\C((z))}(V)$ and $\sgl_{\C((z))}(V)$ the Lie algebras of $\G$, $\Gl_{\C((z))}(V)$ and $\SGl_{\C((z))}(V)$, respectively. Let us define $\Diffc^1_{\C((z))/\C}(V,V)$ as the space of differential operators of order $\leq 1$ from $V$ to $V$ over $\C((z))$, and let us consider the subspace $\Dcal^1_{\C((z))/\C}(V,V)$ of scalar differential operators (see \cite[Ch.16]{EGAIV}).

\begin{prop}\label{gen:p:LieSGl}
We have that:
    $$\sgl_{\C((z))}(V)=\Dcal^1_{\C((z))/\C}(V,V)$$
as Lie subalgebras of $\End_{\C} V$.
\end{prop}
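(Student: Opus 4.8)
The plan is to exhibit both sides as the middle terms of two short exact sequences sharing the same outer terms, and to match them by differentiating the defining relation~\eqref{loc:e:SGldef}.

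First I would pass to Lie algebras in Proposition~\ref{gen:p:SGl}. Evaluating the split exact sequence $0\to\Gl_{\C((z))}(V)\to\SGl_{\C((z))}(V)\to\G\to 0$ on the dual numbers $\C[\epsilon]/(\epsilon^2)$ produces a split exact sequence of Lie algebras
\begin{equation*}
0\to\gl_{\C((z))}(V)\to\sgl_{\C((z))}(V)\xrightarrow{\ \rho\ }\g\to 0,
\end{equation*}
in which $\gl_{\C((z))}(V)=\End_{\C((z))}(V)$ and $\g=\Lie(\G)$ is the Lie algebra of continuous $\C$-derivations $\Der_{\C}(\C((z)))=\C((z))\tfrac{d}{dz}$. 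On the other side I would invoke the standard symbol sequence for scalar differential operators,
\begin{equation*}
0\to\Dcal^0_{\C((z))/\C}(V,V)\to\Dcal^1_{\C((z))/\C}(V,V)\xrightarrow{\ \sigma\ }\Der_{\C}(\C((z)))\to 0,
\end{equation*}
whose kernel consists of the order-zero operators $\End_{\C((z))}(V)$ and whose symbol map $\sigma$ assigns to a scalar first-order operator its symbol; surjectivity of $\sigma$ is witnessed by the componentwise derivation $\partial\otimes\Id_V$.

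The bridge between the two sequences comes from differentiating~\eqref{loc:e:SGldef}. A tangent vector at the identity is a $\C$-linear endomorphism $D\in\End_{\C}V$ arising from $\gamma=\Id_V+\epsilon D$, whose associated algebra automorphism is $g=\Id+\epsilon\partial$ with $\partial=\rho(D)$. Reading $\gamma(z\cdot v)=g(z)\cdot\gamma(v)$ modulo $\epsilon^2$ gives $D(zv)=zD(v)+\partial(z)v$; since $g$ is an algebra automorphism this relation propagates by multiplicativity and continuity to every $f\in\C((z))$, so that
\begin{equation*}
D(fv)=fD(v)+\partial(f)v,\qquad\text{that is}\qquad [D,f]=\partial(f)\,\Id_V,
\end{equation*}
where $f$ denotes multiplication by $f$. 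This is precisely the condition that $D$ be a differential operator of order $\le 1$ with scalar symbol $\partial$, and conversely every such operator satisfies it. Hence $D\mapsto D$ defines a $\C$-linear map $\sgl_{\C((z))}(V)\to\Dcal^1_{\C((z))/\C}(V,V)$ carrying the first sequence to the second and inducing the identity on both outer terms.

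To finish I would apply the five lemma to deduce that this comparison map is a linear isomorphism; since it is literally the inclusion $D\mapsto D$, the two subsets of $\End_{\C}V$ coincide. It then remains to check bracket compatibility: the bracket on $\sgl_{\C((z))}(V)$ is the commutator inherited from $\Gl_{\C}(V)\subset\End_{\C}V$, and differential operators are also bracketed by the commutator of $\End_{\C}V$, so it suffices to verify that $\Dcal^1_{\C((z))/\C}(V,V)$ is closed under it. For operators $D_1,D_2$ with scalar symbols $\partial_1,\partial_2$ one computes
\begin{equation*}
[[D_1,D_2],f]=[D_1,\partial_2(f)\Id_V]-[D_2,\partial_1(f)\Id_V]=[\partial_1,\partial_2](f)\,\Id_V,
\end{equation*}
so $[D_1,D_2]$ is again scalar first-order with symbol $[\partial_1,\partial_2]$, confirming closure and the compatibility of $\sigma$ with $\rho$. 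I expect the only genuinely delicate point to be the differentiation step: one must handle the $\C[\epsilon]/(\epsilon^2)$-points of the formal functors $\SGl_{\C((z))}(V)$ and $\G$ correctly, identify $\Lie(\G)$ with the continuous derivations (so that $\partial$ is determined by $\partial(z)$), and make sure the first-order relation extends from $z$ to all Laurent series; the remaining exactness, surjectivity and closure verifications are routine.
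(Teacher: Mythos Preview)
Your proposal is correct and follows essentially the same approach as the paper: both differentiate the semilinearity relation~\eqref{loc:e:SGldef} on dual numbers to obtain $\gamma_0(zv)=z\gamma_0(v)+g_0(z)v$, identify this with the scalar first-order condition via the symbol sequence, and then verify the Lie bracket. The only cosmetic difference is that you package the linear isomorphism via the five lemma and check bracket closure abstractly through $[[D_1,D_2],f]=[\partial_1,\partial_2](f)\Id_V$, whereas the paper writes out the structure constants on the basis $\{L_r, E_{ij}^s\}$ explicitly.
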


\begin{proof}
Let $\C[\epsilon]/(\epsilon^2)$ be the ring of dual numbers. By definition, $\sgl_{\C((z))}(V)$
consists of $\C[\epsilon]/(\epsilon^2)$-linear automorphisms $\gamma$ of $V\oplus \epsilon V$ such that $\gamma_{|{\epsilon=0}}=\Ide$ and for which there exists a $\C[\epsilon]/(\epsilon^2)$-algebra automorphism:
    $$g\colon \C((z))\oplus \epsilon \C((z)) \iso \C((z))\oplus \epsilon \C((z))$$
satisfying $g_{|{\epsilon=0}}=\Ide$ and $\gamma(zv)=g(z)\gamma(v)$.

Since $\gamma$ is a $\C[\epsilon]/(\epsilon^2)$-linear automorphism, one can write $\gamma=\Ide+\epsilon \gamma_0$,
where $\gamma_0\in \End_{\C} V$. Similarly, $g$ being a $\C[\epsilon]/(\epsilon^2)$-algebra automorphism implies that $g=1+\epsilon g_0$, where $g_0\in \g \iso \Der_{\C}\big(\C((z))\big)$ (see \cite{MP2}). Now the condition:
    $$
    (\Ide+\epsilon \gamma_0)(zv)=(1+\epsilon g_0)(z)(\Ide +\epsilon \gamma_0)(v)
    $$
implies:
    \begin{equation}\label{loc:e:gamma0}
      \gamma_0(zv)=z\gamma_0(v)+g_0(z)v \, ,
    \end{equation}
that is, $\gamma_0 \in \Dcal^1_{\C((z))/\C}(V,V)$. Thus, we have obtained a $\C$-vector space isomorphism:
    \begin{align*}
     \sgl_{\C((z))}(V)&\iso \Dcal^1_{\C((z))/\C}(V,V)\\
    \Ide+\epsilon \gamma_0 & \mapsto \gamma_0 \,.
    \end{align*}

It remains for us to show that this is a Lie algebra isomorphism. Observe that by the very definition $\Dcal^1_{\C((z))/\C}(V,V)$ fits into the following exact sequence:
    \begin{equation}\label{loc:e:DV}
      0\to \End_{\C((z))}V \to \Dcal^1_{\C((z))/\C}(V,V)\xrightarrow{\sigma} \Der_{\C}\big(\C((z))\big)\to 0\,,
    \end{equation}
($\sigma$ being the symbol map). Note that $\sigma (\gamma_0)=g_0$.

Bearing in mind that $\Der_{\C}\big(\C((z))\big) = \C((z))\partial_z$ and that this sequence splits as a sequence of vector spaces, we can write down the elements of $\Dcal^1_{\C((z))/\C}(V,V)$ as $\gamma+ g\partial_z$, where $\gamma \in \End_{\C((z))}V$ and $g\in \Der_{\C}\big(\C((z))\big)$. However, the Lie bracket of $\Dcal^1_{\C((z))/\C}(V,V)$, which is canonically inherited from that of $\End_{\C} V$,  is given by:
    $$
    [\gamma_1+g_1\partial_z,\gamma_2+g_2\partial_z]=
    [\gamma_1,\gamma_2]+[g_1\partial_z,g_2\partial_z]+g_1\partial_z(\gamma_2)-g_2\partial_z(\gamma_1)\,.
    $$
Setting $\{L_r=z^{r+1}\partial_z\,|\,r\in \Z\}$ as a basis for $\Der_{\C}\big(\C((z))\big)$, and $\{E_{ij}^s\,|\,s\in \Z\,,\,i,j=1,\dots,n\}$ as a basis for $\End_{\C((z))}V$ (where $E_{ij}^s$ is a $n\times n$ matrix whose $(i,j)$-entry is $z^s$ and $0$ otherwise). The Lie bracket of $\Dcal^1_{\C((z))/\C}(V,V)$ is given by following rules (see e.g. \cite{KRa}):
    \begin{equation}\label{genhit:e:braD^1}
      \begin{aligned}
    		&[L_r,L_s]=(s-r)L_{r+s}\\
    &[E_{ij}^r,E_{kl}^s]=\delta_{jk}E_{il}^{r+s}-\delta_{li}E_{kj}^{s+r}\\
    &[L_r,E_{kl}^s]= s E_{kl}^{r+s}
      \end{aligned}
    \end{equation}
It is straightforward to check that these expressions coincide with those for the Lie bracket of $\sgl_{\C((z))}(V)$.
\end{proof}

\begin{remark}
  Notice that equation (\ref{loc:e:gamma0}) says that $\gamma_0$ can be viewed as a covariant derivative along the vector field $g_0(z)\in \Der_{\C}\big(\C((z))\big)$. This kind of structure is also considered in \cite[Section 1]{KRad}.
\end{remark}

\subsection{Central extensions.}\label{loc:s:central}\qquad

The aim of this section is to compute the cocycles associated with some central extensions defined by $\sgl_{\C((z))}(V)$. Some of these central extensions come from pullbacks of algebras of the Virasoro type, while others arise as intertwinements of Kac-Moody and Virasoro algebras.

We shall review some facts concerning the construction of a family of Virasoro algebras (\cite[Section 3.5]{MP2}). Let $\Gr(V)$ denote the infinite Grassmannian associated with $(V,V_+)$ (\cite{Sa,SW}). Its rational points correspond to the vector subspaces $W\subseteq V$ such that:
    $$W\cap V_+ \, \mbox{ and }\, V/W+V_+$$
are finite-dimensional vector spaces over $\C$.

The group $\Gl_{\C}(V)$ acts on $\Gr(V)$ and preserves the determinant bundle. Therefore, one has the canonical central extension induced by the determinant bundle:
    $$1\to \C^{\ast} \to \widehat \Gl_{\C}(V) \to \Gl_{\C}(V) \to 1$$
and the cocycle associated with this central extension is given by:
    $$c(g_1,g_2)=\det (\bar g_1 \circ (\overline{g_1\circ g_2})^{-1}\circ \bar g_2)\,,$$
where $\bar g_i$ are preimages of $g_i$.

Let $\Ide + \epsilon_i D_i$ be a $\C[\epsilon_i]/\epsilon_i^2$-valued point of $\Gl_{\C}(V)$. The very definition of the cocycle at the Lie algebra level yields the expression:
    \begin{equation}\label{gen:e:Liecocycle}
      c_{Lie}(D_1,D_2)=\Tr(D_1^{+-}D_2^{-+}-D_2^{+-}D_1^{-+})\,,
    \end{equation}
where $D_i^{+-}\colon V^+ \to V^-$ is induced by $\Ide + \epsilon_iD_i$ w.r.t. the decomposition $V\iso V^- \oplus V^+$ where  $V^-=z^{-1}\C[z^{-1}]^n$, $V^+=\C[[z]]^n$.

Fix an integer number $\beta$ and consider the $\C$-vector space $\C((z))(dz)^{\otimes \beta}$. There is an action of $\G$ on $\C((z))(dz)^{\otimes \beta}$ defined by:
    \begin{equation}\label{loc:e:Gdif}
     \begin{aligned}
      \mu_{\beta}\colon \G \times \C((z))(dz)^{\otimes \beta} & \to \C((z))(dz)^{\otimes \beta}\\
    (g(z),f(z)(dz)^{\otimes \beta})&\mapsto f\big(g(z)\big)g'(z)^{\beta}(dz)^{\otimes \beta}
     \end{aligned}
    \end{equation}
which induces an action on $\Gr(\C((z))(dz)^{\otimes \beta})$ (denoted again by $\mu_{\beta}$) verifying:
$$\mu_{\beta}(g(z))=g'(z)^{\beta}\mu_0(g(z))$$
and preserving the determinant line bundle. Therefore, we may consider the associated central extension:
    \begin{equation}\label{loc:e:centralGbeta}
      1\to \C^{\ast} \to \widehat \G_{\beta} \to \G \to 1\,.
    \end{equation}
As $\beta$ varies in $\Z$, the cocycles corresponding to these central extensions are as follows:
    \begin{equation}\label{locm:e:betaG}
      vir_{\beta}(L_r,L_s)=\delta_{r,-s}\big(\frac{r^3-r}{6}\big)(1-6\beta+6\beta^2)
    \end{equation}
(at the Lie algebra level). Notice that $\widehat \g_1$ is precisely the Virasoro algebra, and the formula:
    \begin{equation}\label{loc:e:betaG2}
      vir_{\beta}=vir_{1}\cdot(1-6\beta+6\beta^2)
    \end{equation}
is a local analogue of the Mumford formula, where $vir_1$ is the standard cocycle associated with the Virasoro algebra.

Note that each central extension:
$$1\to \C^{\ast} \to \widehat \G_{\beta} \to \G \to 1$$
can be pulled back to $\SGl_{\C((z))}(V)$, by the surjection $\SGl_{\C((z))}(V) \xrightarrow{p_n} \G$ (recall that $n=\di_{\C((z))}V$), yielding a central extension:
    \begin{equation}\label{loc:e:centralGnbeta}
      1\to \C^{\ast} \to \SGl_{\C((z))}(V)\times_{\G}\widehat \G_{\beta} \to \SGl_{\C((z))}(V) \to 1\,.
    \end{equation}
Let us denote by $vir_{n,\beta}$ the cocycle corresponding to this central extension induced at the Lie algebra level. By construction, one has that:
    \begin{equation}\label{loc:e:vir_nbeta}
      vir_{n,\beta}=p_n^{\ast}(vir_{\beta})\,.
    \end{equation}
More explicitly, the following formulae hold:
    \begin{equation}\label{loc:e:cocycleGltriv}
    \begin{aligned}
      &vir_{n,\beta}(L_r,L_s)=n\cdot \delta_{r,-s}\cdot \frac{s^3-s}{6}(1-6\beta+6\beta^2)\\
      &vir_{n,\beta}(E_{ij}^{r},E_{kl}^{s})=0\\
      &vir_{n,\beta}(L_r,E_{ij}^s)=0
    \end{aligned}
    \end{equation}
with the same notations as equations~(\ref{genhit:e:braD^1}). In particular, $vir_{n,\beta}=n\cdot vir_{1,\beta}$.

The next step is to find  a family of central extensions of $\SGl_{\C((z))}(V)$ that, at the Lie algebra level, intertwine the structure of both $\lgl_{\C((z))}(V)$ Kac-Moody and Virasoro algebras.

Similarly to the case of $\G$, one has a natural action of $\SGl_{\C((z))}(V)$ on $V_{n,\beta}=(\C((z))(dz)^{\otimes \beta})^n$ defined by:
    $$\mu_{n,\beta}\big(\gamma(z\cdot v)\big)=g'(z)^{\beta}\cdot \gamma(v)\,.$$

\begin{thm}\label{loc:t:detbeta}
The action $\mu_{n,\beta}$ induces an action of $\SGl_{\C((z))}(V)$ on $\Gr(V)$, which preserves the determinant line bundle, and therefore there exists a central extension:
    $$1\to \C^{\ast} \to \widehat \SGl_{\C((z))}^{\beta}(V) \to \SGl_{\C((z))}(V)\to 1$$
canonically associated with $\mu_{n,\beta}$.
\end{thm}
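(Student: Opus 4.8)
The plan is to reduce the statement to properties already available for the restricted linear group $\Gl_\C(V)$, by exploiting a factorization of $\mu_{n,\beta}$ entirely analogous to the identity $\mu_\beta(g(z))=g'(z)^\beta\mu_0(g(z))$ recorded for $\G$ just after \eqref{loc:e:Gdif}. First I would make precise the assertion that, for a point $(\gamma,g)\in\SGl_{\C((z))}(V)$, the operator $\mu_{n,\beta}(\gamma)$ is obtained from the underlying semilinear automorphism $\gamma$ by twisting with the unit $g'(z)^\beta$; after identifying $\Gr(V_{n,\beta})$ with $\Gr(V)$ through the trivialization $(dz)^{\otimes\beta}$, this exhibits $\mu_{n,\beta}(\gamma)$ as the composition of $\gamma$ with multiplication by $g'(z)^\beta$. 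Since $g\in\G$ is a formal change of coordinate, $g'(z)$ has nonzero constant term, so $g'(z)^\beta$ is a well-defined unit of $\C[[z]]$ for every $\beta\in\Z$; multiplication by it is a $\C((z))$-linear automorphism of $V$ preserving $V_+=\C[[z]]^n$, hence an element of $\Gl_{\C((z))}(V)\subseteq\Gl_\C(V)$, while $\gamma$ lies in the restricted group $\Gl_\C(V)$ by Definition~\ref{gen:d:SGl}.

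Next I would verify that $\mu_{n,\beta}$ is genuinely a group action. This amounts to the homomorphism property with respect to the semidirect-product law of Proposition~\ref{gen:p:SGl}, and the only non-formal input is the compatibility of the twisting factors, which is exactly the chain rule: for $g_1,g_2\in\G$ one has $(g_1\circ g_2)'(z)^\beta=g_1'(g_2(z))^\beta\,g_2'(z)^\beta$, so the units $g'(z)^\beta$ compose as a cocycle matching the composition $c_{g_2}(\gamma_1)\circ\gamma_2$ of the semilinear parts. Because $\Gl_\C(V)$ is a group and both factors of $\mu_{n,\beta}(\gamma)$ lie in it, each $\mu_{n,\beta}(\gamma)$ lies in $\Gl_\C(V)$ and therefore induces a well-defined morphism of $\Gr(V)$; this is the claimed action.

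For the preservation of the determinant line bundle I would again argue factor by factor. The action of $\Gl_\C(V)$ on $\Gr(V)$ preserves the determinant bundle, as recalled at the beginning of \S\ref{loc:s:central}; in particular both the semilinear part $\gamma$ and the $\C((z))$-linear twist by $g'(z)^\beta$ preserve it. Since preservation of the determinant bundle means the existence of a lift of the base automorphism to the total space, and lifts compose, the composite $\mu_{n,\beta}(\gamma)$ preserves the determinant bundle as well. Finally, the central extension is produced by the same general mechanism already used to construct $\widehat\Gl_\C(V)$ and $\widehat\G_\beta$ in \eqref{loc:e:centralGbeta}: the automorphisms of the determinant bundle lying over the maps $\mu_{n,\beta}(\gamma)$ form a group $\widehat\SGl_{\C((z))}^{\beta}(V)$, and two such lifts differ by a scalar in $\C^{\ast}=\Aut$ of the fiber, which yields the exact sequence $1\to\C^{\ast}\to\widehat\SGl_{\C((z))}^{\beta}(V)\to\SGl_{\C((z))}(V)\to1$.

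I expect the main obstacle to lie in the first two steps: pinning down the precise form of $\mu_{n,\beta}(\gamma)$ as a twist of the semilinear automorphism $\gamma$ — being attentive to the fact that $\gamma$ is only $\C$-linear and to the coordinate in which the unit $g'(z)^\beta$ is applied — and then checking, via the chain rule, that these twists are compatible with the semidirect-product composition law of Proposition~\ref{gen:p:SGl}. Once this factorization is in place, membership in $\Gl_\C(V)$ and preservation of the determinant bundle follow at once from the corresponding properties of $\Gl_\C(V)$, and the construction of the central extension is formal.
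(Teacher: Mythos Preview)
Your proposal is correct and follows essentially the same route as the paper. The paper's proof is terser: it observes via Proposition~\ref{gen:p:SGl} that $\SGl_{\C((z))}(V)$ is generated inside $\Gl_{\C}(V)$ by $\G$ and $\Gl_{\C((z))}(V)$, and then invokes two results from \cite{MP2} for the action on $\Gr(V)$, the preservation of the determinant, and the resulting central extension; your factorization $\mu_{n,\beta}(\gamma)=g'(z)^{\beta}\cdot\gamma$ and chain-rule verification are exactly the content hidden behind those citations.
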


\begin{proof}
By Proposition \ref{gen:p:SGl}, $\SGl_{\C((z))}(V)$ is the subgroup of $\Gl_{\C}(V)$ generated by $\G$ and $\Gl_{\C((z))}(V)$. Using \cite[Theorem 2.2]{MP2} the first part of the statement follows. The existence of the central extension is a consequence of \cite[Theorem 2.3]{MP2}.
\end{proof}

Taking into account the expressions of equation (\ref{genhit:e:braD^1}), the Lie algebra structure of $\widehat \sgl_{\C((z))}^{\beta}(V)$ is governed by the following rules:
      \begin{align*}
    &[L_r,L_s]=(s-r)L_{r+s}+c_{n,\beta}(L_r,L_s)\\
    &[E_{ij}^r,E_{kl}^s]=\delta_{jk}E_{il}^{r+s}-\delta_{li}E_{kj}^{s+r}+c_{n,\beta}(E_{ij}^{r},E_{kl}^{s})\\
    &[L_r,E_{kl}^s]= s E_{kl}^{r+s}+c_{n,\beta}(L_r,E_{kl}^s)
      \, ,
      \end{align*}
where $c_{n,\beta}$ denotes the corresponding cocycle. Thus, it remains for us to compute this cocycle.

\begin{prop}\label{loc:p:betaSGl}
The cocycle, $c_{n,\beta}$, associated with the central extension $\widehat \sgl_{\C((z))}^{\beta}(V)$ is given by:
    \begin{align*}
      &c_{n,\beta}(L_r,L_s)=n\cdot \delta_{r,-s}\cdot \frac{r^3-r}{6}(1-6\beta+6\beta^2)\\
      &c_{n,\beta}(E_{ij}^{r},E_{kl}^{s})=\delta_{r,-s}\delta_{il}\delta_{jk}\cdot s\\
      &c_{n,\beta}(L_r,E_{ij}^s)=\delta_{r,-s}\delta_{ij}\cdot \frac{r(r+1)}{2}(1-2\beta)
    \end{align*}
\end{prop}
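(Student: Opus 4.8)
The plan is to reduce everything to the Lie-algebra cocycle formula~(\ref{gen:e:Liecocycle}), so my first task is to make the infinitesimal action $\mu_{n,\beta}$ completely explicit as a family of operators on $V_{n,\beta}$. Trivializing $(dz)^{\otimes\beta}$ identifies $V_{n,\beta}$ with $V=\C((z))^n$, and under this identification I would compute, for each basis element of $\sgl_{\C((z))}(V)$, the endomorphism $D$ for which $\Ide+\epsilon D$ is the induced point of $\Gl_{\C}(V)$. Writing $\{z^m e_p\}$ for the obvious basis of $V$ (with $e_p$ the $p$-th standard vector), the key observation is that the linear part $\Gl_{\C((z))}(V)$ acts with $g'(z)=1$, hence without any $\beta$-twist, whereas only the $\G$-direction carries the factor $g'(z)^\beta$.

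Concretely, for $g(z)=z+\epsilon z^{r+1}$ one has $g'(z)^\beta=1+\epsilon\beta(r+1)z^r$ and $f(g(z))=f(z)+\epsilon z^{r+1}f'(z)$, so $L_r$ induces the operator $\mathcal{L}_r$ acting diagonally on the $n$ components by $\mathcal{L}_r(z^m e_p)=(m+\beta(r+1))\,z^{r+m}e_p$. Likewise $E_{ij}^s$ induces the untwisted operator $\mathcal{E}_{ij}^s(z^m e_p)=\delta_{jp}\,z^{m+s}e_i$. As a consistency check, a direct bracket computation gives $[\mathcal{L}_r,\mathcal{E}_{ij}^s]=s\,\mathcal{E}_{ij}^{r+s}$, in agreement with~(\ref{genhit:e:braD^1}); this confirms that the $\beta$-twist lives in the Virasoro direction only.

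For the two diagonal blocks of the cocycle I would argue by reduction rather than recomputation. Since $\mathcal{L}_r$ is block-diagonal with $n$ identical copies of the one-component operator $z^m\mapsto(m+\beta(r+1))z^{r+m}$, the trace formula~(\ref{gen:e:Liecocycle}) immediately returns $n$ times the single-component value, which by~(\ref{locm:e:betaG}) is the Virasoro cocycle $vir_\beta$; hence $c_{n,\beta}(L_r,L_s)=n\,\delta_{r,-s}\frac{r^3-r}{6}(1-6\beta+6\beta^2)$. The operators $\mathcal{E}_{ij}^s$ carry no $\beta$, so $c_{n,\beta}(E_{ij}^r,E_{kl}^s)$ is the standard affine Kac-Moody cocycle of $\gl_n$; feeding the off-diagonal blocks of $\mathcal{E}_{ij}^r$ into~(\ref{gen:e:Liecocycle}) with $V^+=\C[[z]]^n$ and $V^-=z^{-1}\C[z^{-1}]^n$ yields $\delta_{r,-s}\delta_{il}\delta_{jk}\cdot s$.

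The real content — and the step I expect to be the main obstacle — is the mixed term $c_{n,\beta}(L_r,E_{ij}^s)$, where the Virasoro and Kac-Moody structures genuinely intertwine and where the $\beta$-dependence survives. Here I apply~(\ref{gen:e:Liecocycle}) to $D_1=\mathcal{L}_r$ and $D_2=\mathcal{E}_{ij}^s$. Because $\mathcal{L}_r$ is diagonal in the components while $\mathcal{E}_{ij}^s$ carries $e_j$ to $e_i$, the composite can have nonzero trace only when $i=j$ and $r+s=0$, producing $\delta_{r,-s}\delta_{ij}$. For $r\ge 1$ the block $\mathcal{L}_r^{+-}$ vanishes, so only $\Tr\big((\mathcal{E}_{ii}^{-r})^{+-}\mathcal{L}_r^{-+}\big)$ contributes; tracking which monomials $z^m$ pass through $V^+$ collapses the trace to the finite sum $\sum_{m=-r}^{-1}(m+\beta(r+1))$, whence $c_{n,\beta}(L_r,E_{ii}^{-r})=-\sum_{m=-r}^{-1}(m+\beta(r+1))=\frac{r(r+1)}{2}(1-2\beta)$. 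The delicate points are fixing the summation range exactly (it is governed by the requirement that the intermediate monomial lie in $V^+$) and isolating the $\beta$-linear contribution coming from the $\beta(r+1)z^r$ part of $\mathcal{L}_r$; the case $r\le -1$ is handled symmetrically and $r=0$ is forced to vanish. Assembling the three computations gives the stated formulae.
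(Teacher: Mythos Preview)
Your proposal is correct and follows essentially the same approach as the paper: write down the matrix entries of the operators $\mathcal{L}_r$ and $\mathcal{E}_{ij}^s$ induced by $\mu_{n,\beta}$ on $V$, then apply the trace formula~(\ref{gen:e:Liecocycle}). The paper states the matrix entries and then simply invokes~(\ref{gen:e:Liecocycle}), whereas you carry out the mixed $(L_r,E_{ij}^s)$ trace explicitly and shortcut the two pure cases by reducing to the known Virasoro and Kac--Moody cocycles; these are minor variations within the same argument.
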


\begin{proof}
Let us denote with $\{e_k \,|\, k\in \Z\}$ a basis of $V$, where $e_k=(0,\dots ,0,z_{k_2}^{k_1},0,\dots ,0)$ (i.e. $z^{k_1}$ lying in the $k_2$-th entry and $0$ elsewhere) and $k=k_1n+k_2-1$ with $k_1\in \Z$ and $k_2=1,\dots ,n$.

Recall that the action of $\SGl_{\C((z))}(V)$ on $V_{n,\beta}=(\C((z))(dz)^{\otimes \beta})^n$ is defined by:
    $$\mu_{n,\beta}\big(\gamma(z\cdot v)\big)=g'(z)^{\beta}\cdot \gamma(v)\,,$$
Therefore, the action of $\Gl_{\C((z))}(V)$ does not depend on $\beta$. One has that the $\Z \times \Z$-matrix associated with $\mu_{n,\beta}(L_r)$ is:
    $$\big(\mu_{n,\beta}(L_r)\big)_{lk}=\left \{
    \begin{array}{ll}
    1 &\text{ if }  l_1=k_1\, \mbox{ and }\,l_2=k_2 \\
    \epsilon \cdot \big(k_1+\beta(1+r)\big) &\text{ if } l_1=k_1+r\, \mbox{ and } \,l_2=k_2 \\
    0 &\text{ otherwise, }
    \end{array}\right .$$
and the element in $\End_{\C}(V)$ corresponding to $(\Ide + \epsilon E_{ij}^r)$ is:
    $$(\Ide + \epsilon E_{ij}^{r})_{lk}=\left \{
    \begin{array}{ll}
    1  &\text{ if }  l_1=k_1\, \mbox{ and }\,l_2=k_2\\
    \epsilon  &\text{ if } l_1=k_1+r \,\mbox{ and }\, l_2=i \,\mbox{ and }\,k_2=j \\
    0  &\text{ otherwise. }
    \end{array}\right .$$
Taking into account equation (\ref{gen:e:Liecocycle}), the result follows.
\end{proof}

\begin{thm}\label{thm:LocalMumVB2}
The following relation holds:
    $$c_{n,\beta}= \beta c_{n,1} + (1-\beta)c_{n,0} + 6n\beta(\beta-1)vir_1 $$
\end{thm}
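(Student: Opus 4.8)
The plan is to reduce the claimed identity to a finite check on basis elements. All four objects in the statement---$c_{n,\beta}$, $c_{n,1}$, $c_{n,0}$ and $vir_1$---are $2$-cocycles on the Lie algebra $\sgl_{\C((z))}(V)=\Dcal^1_{\C((z))/\C}(V,V)$, so the asserted equality is an identity of skew-symmetric bilinear forms. Since by Proposition~\ref{gen:p:LieSGl} this Lie algebra is spanned by the $L_r$ together with the $E_{ij}^s$, by bilinearity it suffices to verify the identity on the three types of pairs of basis elements: $(L_r,L_s)$, $(E_{ij}^r,E_{kl}^s)$ and $(L_r,E_{ij}^s)$. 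The values of $c_{n,\beta}$ on each are supplied by Proposition~\ref{loc:p:betaSGl}, and specializing $\beta=1$ and $\beta=0$ gives $c_{n,1}$ and $c_{n,0}$. For $vir_1$ I will use that it is the standard Virasoro cocycle of equation~(\ref{loc:e:betaG2}), pulled back to $\SGl_{\C((z))}(V)$; concretely $vir_1(L_r,L_s)=\delta_{r,-s}\tfrac{r^3-r}{6}$, with no factor of $n$ (the $n$ being displayed explicitly in the term $6n\beta(\beta-1)$), while $vir_1$ vanishes on the pairs $(E_{ij}^r,E_{kl}^s)$ and $(L_r,E_{ij}^s)$ since it is supported on the Virasoro directions alone.

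With these inputs the three checks become elementary polynomial identities in $\beta$. On the pair $(L_r,L_s)$ the common factor $n\,\delta_{r,-s}\tfrac{r^3-r}{6}$ can be stripped off, and the identity collapses to the scalar relation $\beta+(1-\beta)+6\beta(\beta-1)=1-6\beta+6\beta^2$; this is precisely the local Mumford relation that produced the quadratic $1-6\beta+6\beta^2$ in the first place. On the pair $(E_{ij}^r,E_{kl}^s)$ both $c_{n,1}$ and $c_{n,0}$ equal $\delta_{r,-s}\delta_{il}\delta_{jk}\,s$ independently of $\beta$, the $vir_1$ term drops out, and the coefficient sum $\beta+(1-\beta)=1$ reproduces the $\beta$-independent value of $c_{n,\beta}$. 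On the mixed pair $(L_r,E_{ij}^s)$ one reads off $c_{n,1}(L_r,E_{ij}^s)=-\delta_{r,-s}\delta_{ij}\tfrac{r(r+1)}{2}$ and $c_{n,0}(L_r,E_{ij}^s)=\delta_{r,-s}\delta_{ij}\tfrac{r(r+1)}{2}$, so after again discarding the vanishing $vir_1$ contribution the identity reduces to $-\beta+(1-\beta)=1-2\beta$, which matches the factor $(1-2\beta)$ of Proposition~\ref{loc:p:betaSGl}.

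Since the identity holds on every pair of basis vectors and all the forms are bilinear, it holds identically, proving the theorem. There is essentially no genuine obstacle here beyond bookkeeping: the only point demanding a little care is to fix the normalization of $vir_1$ so that its $(L_r,L_s)$-value carries no factor of $n$, and to record that $vir_1$ is supported solely on the Virasoro part, so that it contributes nothing to the Kac--Moody and mixed components. Once these conventions are pinned down the verification is purely computational, and the content of the theorem is exactly that the $\beta$-dependence of $c_{n,\beta}$ is the affine interpolation of its values at $\beta=0,1$ corrected by the quadratic Virasoro term.
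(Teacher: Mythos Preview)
Your proof is correct and follows essentially the same approach as the paper: both reduce the identity to a check on the three types of basis pairs $(L_r,L_s)$, $(E_{ij}^r,E_{kl}^s)$, $(L_r,E_{ij}^s)$ using the explicit values from Proposition~\ref{loc:p:betaSGl}. The only cosmetic difference is that the paper first writes the relation with $vir_{n,1}$ and then invokes $vir_{n,1}=n\cdot vir_1$, whereas you absorb that step into your normalization of $vir_1$ from the outset.
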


\begin{proof}
Using Proposition \ref{loc:p:betaSGl} and equation (\ref{loc:e:cocycleGltriv}), the values of the cocycles for a pair of elements of the basis can be arranged as seen in the following table:
    {\small \begin{equation*}
      \begin{array}{|c|c|c|c|}
    \hline
     & (L_r,L_s) & (L_r,E_{ij}^s) & (E_{ij}^r,E_{kl}^s) \\
    \hline
    vir_{n,1} & n\cdot \delta_{r,-s}\frac{r^3-r}{6} & 0 & 0 \\
    \hline
    c_{n,0} & n\cdot \delta_{r,-s}\frac{r^3-r}{6} &\delta_{r,-s}\delta_{ij}\cdot \frac{r(r+1)}{2} &\delta_{r,-s}\delta_{il}\delta_{jk}\cdot s \\
    \hline
    c_{n,1} & n\cdot \delta_{r,-s}\frac{r^3-r}{6} & -\delta_{r,-s}\delta_{ij}\cdot \frac{r(r+1)}{2} & \delta_{r,-s}\delta_{il}\delta_{jk}\cdot s\\
    \hline
    c_{n,\beta} & n\cdot \delta_{r,-s}\frac{r^3-r}{6}(1-6\beta+6\beta^2)& \delta_{r,-s}\delta_{ij}\cdot \frac{r(r+1)}{2}(1-2\beta) & \delta_{r,-s}\delta_{il}\delta_{jk}\cdot s\\
    \hline
    \end{array}
    \end{equation*}}
From this table, one notes that:
    $$c_{n,\beta}= \beta c_{n,1}+ (1-\beta)c_{n,0} +6\beta(\beta-1)vir_{n,1} \,,$$
and by virtue of equation (\ref{loc:e:vir_nbeta}) one has:
    $$vir_{n,1}=n\cdot vir_1\,.$$
Therefore:
    $$c_{n,\beta}=\beta c_{n,1} + (1-\beta)c_{n,0} +  6n\beta(\beta-1)vir_1 \,.$$
\end{proof}

Let us see how the above result can be restated in terms of bitorsors or, equivalently, line bundles over $\SGl_{\C((z))}(V)$ (see \cite[Expos\'{e} VII]{SGA7} for the relationships among bitorsors, line bundles and extensions). Let ${\mathbb L}_{n,\beta}$ denote the bitorsor over $\SGl_{\C((z))}(V)$ associated with $\widehat \SGl_{\C((z))}^{\beta}(V)$ (see Theorem~\ref{loc:t:detbeta}) and let $\Lambda_1$ be the bitorsor over $G$ corresponding to $\widehat G_1$ (equation~(\ref{loc:e:centralGbeta})). Thus, Theorem~\ref{thm:LocalMumVB2} is equivalent to the following identity:
    \begin{equation}\label{eq:TorsorsOnSGL}
    {\mathbb L}_{n,\beta}\, \simeq
    \, {\mathbb L}_{n,1}^{\otimes\beta} \otimes {\mathbb L}_{n,0}^{\otimes(1-\beta)} \otimes p^* \Lambda_1^{\otimes 6n\beta(\beta-1)}
    \, , 
    \end{equation}
where $p$ is the natural projection map $\SGl_{\C((z))}(V)\to G$.

\subsection{Comments on $\sgl_{\C((z))}(V)$: relationships with other Lie algebras.}\label{subsec:structSGL}\qquad

Let us finish this section with a brief discussion of the properties of $\SGl_{\C((z))}(V)$ and, more precisely, of the relationship between its Lie algebra and the Atiyah  and  $\mathcal W_{1+\infty}$ algebras. From our point of view,  these connections make  $\SGl_{\C((z))}(V)$ a relevant object that deserves deeper study.

To begin with, observe that there is a pullback map from the central extensions of $\sgl_{\C((z))}(V)$ to those of $\lgl_{\C((z))}(V)$:
    \begin{equation}\label{eq:restrictionH2}
    \Hcoh^2(\sgl_{\C((z))}(V),\C)\,\longrightarrow\, \Hcoh^2(\lgl_{\C((z))}(V),\C)
    \, , 
    \end{equation}
defined by restriction through the inclusion $\lgl_{\C((z))}(V)\hookrightarrow \sgl_{\C((z))}(V)$. Note that the Kac-Moody algebra is a central extension of $\lgl_{\C((z))}(V)$. In this way, we establish a link with the theory of Kac-Moody algebras and, by considering the action on the spaces of global sections of powers of the determinant bundle, one obtains semi-infinite wedge representations of such algebras (e.g. \cite[Lecture 9]{KRa}). Additionally, recall that section~\ref{loc:s:central} provides a map:
    $$\Hcoh^2(\g,\C)\hookrightarrow \Hcoh^2(\sgl_{\C((z))}(V),\C)\,,$$
(where $\g=Lie(G)$ is referred to in the literature as the Witt algebra and its central extension is the Virasoro algebra).

Summing up, the structures of both Kac-Moody and Virasoro algebras are intertwined naturally into a single object, namely, the group functor $\SGl_{\C((z))}(V)$. This group will be endowed with a geometric meaning (in terms of vectors bundles over algebraic curves) in the following section.

Furthermore, the Lie algebra $\sgl_{\C((z))}(V)$ can be thought of as a formal analogue to the so-called Atiyah algebras of \cite{BS}. It is worth pointing out that by the equivalence of categories between Atiyah algebras and algebras of differential operators (see  \cite{BS} \S1), the algebra of differential operators associated with $\sgl_{\C((z))}(V)$ is precisely $\lgl_{\C((z))}(V)\otimes_{\C} \C[[\partial_z]]$; that is, the algebra of differential operators of arbitrary order whose coefficients are matrices.

It is well known (\cite{LW}) that $\di \Hcoh^2(\lgl_{\C((z))}(V)\otimes_{\C} \C[[\partial_z]],\C)=1$ or, in other words, that it has essentially a unique central extension (the simplest case dates back to \cite{Rad}). Bearing in mind the explicit expressions for such $2$-cocycles given in \cite{KPe} and {\cite{KRa}}, one is able to describe such an extension explicitly for the case of $\lgl_{\C((z))}(V)\otimes_{\C} \C[[\partial_z]]$:
    $$
    \Psi(A(z)\partial_z^r, B(z)\partial_z^s) \,:=\,
    \frac{r! s!}{(r+s+1)!} \operatorname{Res}_{z=0} \Tr\big(\partial_z^{s+1}A(z)
    \cdot \partial_z^{r}B(z)\big)dz
    \, .
    $$
It is  remarkable that the restriction of this cocycle to the Lie subalgebra of differential operators of order $\leq 1$  coincides with the expression computed in Proposition \ref{loc:p:betaSGl} (for $\beta=0$).

Another relevant reference about the relationship between the representation theory of Virasoro and Kac-Moody algebras and quantum physics is \cite{GO}. The semidirect product of the Virasoro and Kac-Moody algebra considered there coincides with the Lie algebra $\widehat \sgl_{\C((z))}^{\beta}(V)$ for $\beta=\frac{1}{2}$ (see also formula $(4.2)$ of  \cite{Semi} for the rank one case).

Let us now restrict ourselves to the case of $\di_{\C((z))}V=1$; i.e. $V=\C((z))$. Thus, the Lie algebra corresponding to the above cocycle is called the ${\mathcal W}_{1+\infty}$-algebra and its representation theory in terms of vertex operator algebras has been  studied in depth (\cite{FKRW}, see also \cite{AFMO}).  In order to shed some light on this, let us note that the action on the Fock space:
    $$
    \widehat\SGl^1_{\C((z))}(\C((z))) \,\hookrightarrow \, \Gl\big( \Hcoh^0(\Gr(\C((z))),\Det^*)\big)
    $$
induces a map between their Lie algebras that extends to:
    $$
    {\mathcal W}_{1+\infty} \, \longrightarrow\, \End\big( \Hcoh^0(\Gr(\C((z))),\Det^*)\big)
    \, .
    $$

In this case, the vector space parametrizing central extensions of the Lie algebra $\sgl_{\C((z))}(\C((z)))$ has been explicitly computed in~\cite{ACKP}, where it was applied to compute some cohomology groups of moduli spaces. There, it was shown that this space is three-dimensional and that is generated by the following $2$-cocycles:
    \begin{equation*}
    \begin{gathered}
    \alpha_1(f_1\partial_z +g_1,f_2\partial_z +g_2) \, =\, \Res_{z=0}f_1 \partial_z^3 f_2 dz
    \\
    \alpha_2(f_1\partial_z +g_1,f_2\partial_z +g_2)\, =\,
    \Res_{z=0}(f_1 \partial_z^2 g_2-f_2 \partial_z^2 g_1)dz
    \\
    \alpha_3(f_1\partial_z +g_1,f_2\partial_z +g_2)\, =\,
    \Res_{z=0}g_1 \partial_z g_2 dz
    \end{gathered}
    \end{equation*}

The Lie algebra  $\sgl_{\C((z))}(\C((z)))$ is also presented in \cite[formula 16]{CD} as the algebra of asymptotic symmetries of the warped black-hole geometries.

Unfortunately, when $V$ is of arbitrary dimension, the whole group is not known. However, our Theorem~\ref{thm:LocalMumVB2} shows that:
    $$
    vir_{n,\beta}\, \in \,
    < vir_{n,1} , vir_{n,0},  p_n^*(vir)> \,\subseteq\,
    \Hcoh^2\big(\sgl_{\C((z))}(V),\C\big)
    $$
and provides the coefficients of the linear combination. Further research will be performed to compute whether $\Hcoh^2\big(\sgl_{\C((z))}(V),\C\big)\iso \C^{3}$, and to generalize (to higher rank case) the results of \cite{ACKP}.

Finally, we direct  interested readers to \cite{Schli} for a cohomological study of central extensions of Krichever-Novikov algebras, which in a certain sense, generalize the case of $\sgl_{\C((z))}(V)$.

\section{The moduli of curves with vector bundles: $\U_g^{\infty}(n,d)$.}\label{loc:s:U}\qquad

\subsection{The Krichever map and the representability of $\U_g^{\infty}(n,d)$}\qquad

Let us offer a quick overview of the relationship between the moduli space of vector bundles and the infinite Sato Grassmannian. The following statements are taken from \cite{Sa,SW,Mu,MP1}.

Let $\U_g^{\infty}(n,d)$ denote the moduli functor whose rational points are tuples $(X,x,z,E,\phi)$ where $X$ is a smooth, projective and irreducible curve of genus $g$ with  marking $x\in X$, $z$ is a formal parameter on $x$:
    \begin{equation}\label{gen:e:z}
    \widehat \Oc_{X,x}\iso \C[[z]]\,,
    \end{equation}
$E$ is a rank $n$ and degree $d$ vector bundle on $X$, and  $\phi$ is a $\widehat \Oc_{X,x}$-module isomorphism:
    \begin{equation}\label{gen:e:phi}
      \phi \colon\widehat E_x\iso \widehat \Oc_{X,x}^n\,.
    \end{equation}

Recall that the infinite Grassmannian of $(V=\C((z))^n, V_+=\C[[z]]^n)$ has a decomposition into connected components:
    $$\Gr(V)=\coprod_{m\in \Z}\Gr^m(V)\,,$$
where $\Gr^m(V)$ consists of the subspaces $W\in \Gr(V)$ such that:
$$m=\di_{\C}(W\cap V_+)-\di_{\C}(V/W+V_+)\,.$$

\begin{defin}\label{gen:d:Kr}
The Krichever map is defined by:
    \begin{equation*}
    \begin{aligned}
    Kr\colon \U_g^{\infty}(n,d)& \,\longrightarrow\,
    \Gr^{\chi}(V) \\
    (X,x,z,E,\phi) & \,\longmapsto\, \Hcoh^0(X-x,E)
    \, ,
    \end{aligned}
    \end{equation*}
where $\Hcoh^0(X-x,E)$ is understood as a subspace of $V$ via the isomorphisms (\ref{gen:e:z}) and (\ref{gen:e:phi}), and $\chi =n(1-g)+d$ (that is, the Euler-Poincar\'e characteristic of $E$).
\end{defin}

Its image is characterized by the following theorem:

\begin{thm}\label{gen:t:carU}
A point $W\in  \Gr^{\chi}\big(V\big)$ lies on the image of the Krichever map if the stabilizer algebra of $W$:
    $$A_W:=\Stab(W)=\{f\in \C((z))\,|\,f\cdot W\subseteq W\}$$
belongs to $\Gr^{1-g}\big(\C((z))\big)$ and $A_W$ is a regular ring. Moreover, the Krichever morphism is injective.
\end{thm}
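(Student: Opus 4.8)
The plan is to prove the two assertions separately. For the characterization, I would reconstruct, from a point $W$ satisfying the hypotheses, a $5$-tuple $(X,x,z,E,\phi)$ whose image under $Kr$ is exactly $W$; for the injectivity I would then observe that each ingredient of that reconstruction is canonically determined by $W$. This is the classical Krichever correspondence (\cite{Sa,SW,Mu,MP1}), so I would only organize the steps rather than carry out every verification.

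First I would analyze the stabilizer ring. Since $A_W\in \Gr^{1-g}\big(\C((z))\big)$, the intersection $A_W\cap \C[[z]]$ is finite-dimensional over $\C$; being a $\C$-subalgebra of $\C[[z]]$ it must reduce to the constants (a non-constant power series generates an infinite-dimensional subalgebra), so $\di_{\C}\big(A_W\cap \C[[z]]\big)=1$, and the index condition then forces $\di_{\C}\big(\C((z))/(A_W+\C[[z]])\big)=g$. Next, using that $A_W$ is a finitely generated $\C$-algebra which is an integral domain of Krull dimension one and, by hypothesis, regular, I would identify $C:=\Spec A_W$ with a smooth affine curve. The inclusion $A_W\hookrightarrow \C((z))$ realizes $\C((z))$ as the fraction field of the completion at the unique missing point, so completing $C$ by adding one point $x$ with $\widehat{\Oc}_{X,x}\iso \C[[z]]$ produces a smooth projective curve $X$ together with a marking $x$ and a formal parameter $z$; the computation above identifies $\Hcoh^1(X,\Oc_X)$ with $\C((z))/(A_W+\C[[z]])$, so that $X$ has genus $g$.

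Then I would build the bundle. Because $A_W\cdot W\subseteq W$, the space $W$ is a torsion-free finitely generated $A_W$-module, so the sheaf it defines is a vector bundle $E_0$ on $C=X-x$; its rank is $n$ because the Grassmannian conditions force $W$ to span $V$ over $\C((z))$. Gluing $E_0$ to the trivial bundle $V_+=\C[[z]]^n$ over the formal disc by means of the inclusion $W\subseteq V$ yields a rank $n$ bundle $E$ on $X$ equipped with a canonical trivialization $\phi\colon \widehat E_x\iso \widehat \Oc_{X,x}^n$. A Mayer--Vietoris argument for the cover $X=(X-x)\cup \Spf \C[[z]]$ gives $\Hcoh^0(X-x,E)=W$ together with the cohomological dictionary $W\cap V_+=\Hcoh^0(X,E)$ and $V/(W+V_+)\iso \Hcoh^1(X,E)$; Riemann--Roch then reads $\chi=\di \Hcoh^0(X,E)-\di \Hcoh^1(X,E)=\deg E+n(1-g)$, so $\deg E=d$. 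Thus $(X,x,z,E,\phi)\in \U_g^{\infty}(n,d)$ and $Kr(X,x,z,E,\phi)=W$, which establishes the characterization.

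Finally, injectivity becomes formal once the reconstruction is seen to be canonical: the ring $A_W$ is intrinsic to $W$, hence so are $X=\overline{\Spec A_W}$, the point $x$ and the parameter $z$; and the $A_W$-module structure on $W$ together with its embedding into $V$ determine $E$ and $\phi$ up to unique isomorphism. Therefore two tuples with the same Krichever image are isomorphic. I expect the main obstacle to be precisely the geometric reconstruction of the middle two steps: proving that $A_W$ is genuinely the coordinate ring of a smooth affine curve whose one-point completion has $z$ as local parameter, and that the $A_W$-module $W$ glues to an honest vector bundle with $\Hcoh^0(X-x,E)=W$. The passage from the abstract conditions on $\Gr^{1-g}\big(\C((z))\big)$ and regularity to these geometric facts is the heart of the Krichever correspondence, and is where the finiteness properties of the infinite Grassmannian must be used most carefully.
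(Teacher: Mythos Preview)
The paper's proof consists solely of the citation ``This follows from \cite{Mu} and \cite{MP1}''; your outline is precisely the classical Krichever inverse construction carried out in those references, so the approaches coincide. The points you yourself flag as needing care---finite generation of $A_W$ over $\C$ and of $W$ over $A_W$, and the fact that the smooth completion of $\Spec A_W$ adds exactly one point with local parameter $z$---are indeed where the Grassmannian finiteness hypotheses do their work, and they are handled in detail in the cited papers.
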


\begin{proof}
This follows from \cite{Mu} and \cite{MP1}.
\end{proof}

\begin{remark}
Notice that the smoothness of $X$ is equivalent to the regularity of $A_W$. If $X$ is allowed to be singular, then the characterization requires a maximality condition for $A_W$ analogous to that of \cite[Section 6]{SW}, where the case of rank $1$ was considered. If one aims at characterizing points with values in any $\C$-scheme $S$, one can follow \cite{Mu}; in this case the flatness of $A_W$ over $S$ has to be imposed.
\end{remark}

\begin{thm}\label{gen:t:repU}
 The moduli space $\U_g^{\infty}(n,d)$ is representable by a subscheme of $\Gr^{\chi}(V)$.
\end{thm}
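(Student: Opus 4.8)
The plan is to realise $\U_g^{\infty}(n,d)$ as the subfunctor of $\Gr^{\chi}(V)$ cut out by the two conditions of Theorem~\ref{gen:t:carU}, and then to verify that these conditions are representable. Since the infinite Grassmannian $\Gr^{\chi}(V)$ is itself representable by a scheme (the Sato--Segal--Wilson--Mulase theory, \cite{Sa,SW,Mu}), it suffices to show that the Krichever map is a monomorphism of functors and that its image is a subscheme.

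First I would promote the injectivity of Theorem~\ref{gen:t:carU} to a monomorphism of functors on $S$-valued points, following \cite{Mu}. For a $\C$-scheme $S$, an $S$-valued point of $\Gr^{\chi}(V)$ is an $\Oc_S$-submodule $W\subseteq V\,\widehat{\otimes}_{\C}\,\Oc_S$ of the appropriate type, and one recovers the datum $(X,x,z,E,\phi)$ from it by first forming the sheaf of stabilizing scalars $\Ac_W=\{f\in \C((z))\,\widehat{\otimes}\,\Oc_S\,:\,f\cdot W\subseteq W\}$, then setting $X\smallsetminus\{x\}=\Spec\Ac_W$ (with $x$ and $z$ determined by the completion at the puncture) and reconstructing $E$ from the $\Ac_W$-module structure on $W$. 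This reconstruction is functorial in $S$ and inverse to $Kr$, which yields the desired monomorphism.

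The core of the argument is to show that the locus defined by the two conditions of Theorem~\ref{gen:t:carU} is a subscheme. I would treat the two conditions separately. The stabilizer $\Ac_W$ furnishes a sheaf of $\Oc_S$-subalgebras of $\C((z))\,\widehat{\otimes}\,\Oc_S$, and the requirement that $\Ac_W$ define an $S$-valued point of $\Gr^{1-g}(\C((z)))$ amounts to asking that $\Ac_W\cap(\C[[z]]\,\widehat{\otimes}\,\Oc_S)$ and the quotient $(\C((z))\,\widehat{\otimes}\,\Oc_S)/(\Ac_W+\C[[z]]\,\widehat{\otimes}\,\Oc_S)$ be locally free of the prescribed ranks; since $\di_{k(s)}$ of the fibres of a coherent sheaf is upper semicontinuous, the locus where these ranks take their fixed values is locally closed. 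On this locus the regularity of $A_W$ --- equivalently, the smoothness of the fibres $X_s$ --- is an open condition, so intersecting with the corresponding open subscheme yields the subscheme of $\Gr^{\chi}(V)$ representing $\U_g^{\infty}(n,d)$.

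The main obstacle I anticipate is the behaviour of the stabilizer in families: one must check that the formation of $\Ac_W$ commutes with arbitrary base change $S'\to S$, and that the two $\Oc_S$-modules it produces are coherent and flat on the relevant locus, so that the semicontinuity argument is legitimate. This base-change and flatness analysis is precisely the technical content carried by \cite{Mu} and \cite{MP1}, and it is what allows the set-theoretic characterization of Theorem~\ref{gen:t:carU} to be upgraded to the scheme-theoretic statement asserted here.
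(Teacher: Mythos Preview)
Your proposal is correct in outline and aligned with the paper, but you have done more work than the authors: the paper's proof of this theorem is a bare citation, ``This follows from \cite{Mu} and \cite{MP1},'' with no further argument. Your sketch --- promoting the injectivity of $Kr$ to a monomorphism of functors via the reconstruction $(A_W,W)\rightsquigarrow(X,x,z,E,\phi)$, then carving out the image by the stabilizer and regularity conditions --- is essentially a reconstruction of what those references establish, and you have correctly flagged the genuine technical point (base-change and flatness of $\Ac_W$) as the content supplied by \cite{Mu,MP1}. There is nothing to compare or correct; your write-up simply unpacks the citation.
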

\begin{proof}
This follows from \cite{Mu} and \cite{MP1}.
\end{proof}

\subsection{Tangent space to $\U_g^{\infty}(n,d)$.}\label{gen:ss:tg}\qquad

With the same notations as before, let us now introduce $\Diffc_{X/\C}^1(E,E)$ as the sheaf of differential operators of order $\leq 1$ from $E$ to $E$ over $\Oc_X$. Recall that it fits into the following exact sequence of sheaves:
$$0\to \Endc_{\Oc_X}E \to \Diffc^1_{X/\C}(E,E)\xrightarrow{\sigma} \Tc_X\otimes_{\Oc_X}\Endc_{\Oc_X}E \to 0$$
where $\sigma$ is the symbol morphism (see \cite[Ch.16]{EGAIV}) and $\Tc_X$ denotes the tangent sheaf.

Let us now consider the subsheaf $\Dcal^1_{X/\C}(E,E)$ of $\Diffc^1_{X/\C}(E,E)$ of scalar differential operators, that is, those whose symbol morphism take values in:
    $$\Tc_X\otimes_{\Oc_X}\Oc_X\hookrightarrow \Tc_X\otimes_{\Oc_X}\Endc_{\Oc_X}E
    \, ,
    $$
where $\Oc_X\hookrightarrow \Endc_{\Oc_X}E$ is the canonical morphism. That is:
    \begin{equation}\label{gen:e:Dcal}
    0\to \Endc_{\Oc_X}E \to \Dcal^1_{X/\C}(E,E)\xrightarrow{\sigma} \Tc_X\to 0
    \, .
    \end{equation}
Note that if $E$ has rank one, then $\Diffc^1_{X/\C}(E,E)=\Dcal^1_{X/\C}(E,E)$.

\begin{remark}
In the literature, this sequence is also refereed to as the Atiyah exact sequence, and $\Dcal^1_{X/\C}(E,E)$ is the well-known Atiyah bundle (one can find a formal analogue of this sequence in equation \ref{loc:e:DV}). For the purpose of this paper, it is also interesting to think of this object as the Atiyah algebra of \cite{BS}. Furthermore, the bundle $\Ac_{\Tr E}$ of \cite{BS} is the extension:
    $$0\to \Oc_X \to \Dcal^1_{X/\C}(E,E)\xrightarrow{\sigma} \Tc_X\to 0$$
induced by the sequence (\ref{gen:e:Dcal}) and the trace map $\Tr \colon \Endc_{\Oc_X}E \to \Oc_{X}$.
\end{remark}

\begin{thm}\label{gen:t:tgUinfty}
One has an isomorphism of $\C$-vector spaces:
    $$\Tg_{\Ec} \U_g^{\infty}(n,d) \iso \varprojlim_m \Hcoh^1(X,\Dcal^1_{X/\C}(E,E(-mx)))
    \, .
    $$
where $\Ec=(X,x,z,E,\phi) \in \U_g^{\infty}(n,d)$ is a rational point.
\end{thm}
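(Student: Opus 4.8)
The plan is to identify $\Tg_{\Ec}\U_g^{\infty}(n,d)$ with the space of first-order deformations of the $5$-tuple $\Ec=(X,x,z,E,\phi)$ — that is, with the $\C[\epsilon]/(\epsilon^2)$-points of $\U_g^{\infty}(n,d)$ reducing to $\Ec$ — and to compute this space by a Mayer--Vietoris argument for the cover of $X$ by $X-x$ and the formal disc $\widehat D=\Spec\widehat\Oc_{X,x}$. The key preliminary remark is that, since $\Oc_X(-mx)$ is invertible, there is a canonical isomorphism $\Dcal^1_{X/\C}(E,E(-mx))\cong \Dcal^1_{X/\C}(E,E)\otimes_{\Oc_X}\Oc_X(-mx)$; twisting the Atiyah sequence (\ref{gen:e:Dcal}) by $\Oc_X(-mx)$ therefore yields, compatibly in $m$, the short exact sequence
\[
0\to (\Endc_{\Oc_X}E)(-mx)\to \Dcal^1_{X/\C}(E,E(-mx))\xrightarrow{\sigma}\Tc_X(-mx)\to 0 .
\]
Here the symbol term $\Tc_X(-mx)$ governs deformations of $(X,x)$ fixing the $m$-jet of the coordinate $z$, while the kernel $(\Endc E)(-mx)$ governs deformations of $E$ fixing the $m$-jet of the trivialization $\phi$; letting $m\to\infty$ rigidifies the full formal data $(z,\phi)$, which is exactly what distinguishes $\U_g^{\infty}(n,d)$ among coarser moduli.

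First I would dispose of the two extreme terms. Taking cohomology and using $\Hcoh^2(X,-)=0$ on a curve gives, for each $m$ and compatibly in $m$, an exact sequence
\[
\Hcoh^1\big(X,(\Endc E)(-mx)\big)\to \Hcoh^1\big(X,\Dcal^1_{X/\C}(E,E(-mx))\big)\xrightarrow{\sigma}\Hcoh^1\big(X,\Tc_X(-mx)\big)\to 0 .
\]
For a fixed coherent sheaf $\F$ the transition maps $\Hcoh^1(X,\F(-(m+1)x))\to\Hcoh^1(X,\F(-mx))$ are, by Serre duality, dual to the inclusions $\Hcoh^0(X,\F^{\vee}\otimes\w(mx))\hookrightarrow\Hcoh^0(X,\F^{\vee}\otimes\w((m+1)x))$, hence surjective; so both outer systems are Mittag--Leffler, $\varprojlim^{1}$ vanishes, and the inverse limit of the displayed sequence stays exact. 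I would then identify $\varprojlim_m\Hcoh^1(X,\Tc_X(-mx))$ with the tangent space to the moduli of triples $(X,x,z)$ — the computation of \cite{MP2}, which through the chosen cover realizes it as $\C((z))\partial_z$ modulo $\Hcoh^0(X-x,\Tc_X)$ — and, by the identical Mayer--Vietoris computation, $\varprojlim_m\Hcoh^1(X,(\Endc E)(-mx))$ with the deformations of $(E,\phi)$ over a fixed $(X,x,z)$, namely $\End_{\C((z))}V$ modulo $\Hcoh^0(X-x,\Endc E)$.

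The heart of the argument is to show that the middle term computes the coupled deformations, i.e.\ that the Atiyah algebra $\Dcal^1_{X/\C}(E,E)$ is the object controlling simultaneous first-order deformations of curve and bundle. Over the cover $\{X-x,\widehat D\}$ a first-order deformation of $\Ec$ that fixes $(z,\phi)$ to order $m$ is glued from an arbitrary deformation of the affine pair $(X-x,E|_{X-x})$ and the rigid trivialized deformation on $\widehat D$, the gluing being a scalar first-order differential operator on the punctured disc vanishing to order $m$ at $x$; this is precisely a \v{C}ech $1$-cocycle for $\Dcal^1_{X/\C}(E,E(-mx))$ for this cover, the coboundaries being the deformations induced by automorphisms, i.e.\ the trivial ones. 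By Proposition~\ref{gen:p:LieSGl} the sections over the punctured disc are $\sgl_{\C((z))}(V)=\Dcal^1_{\C((z))/\C}(V,V)$, so in the limit the middle term becomes $\sgl_{\C((z))}(V)$ modulo $\Hcoh^0(X-x,\Dcal^1_{X/\C}(E,E))$; this agrees with the description of $\Tg_{\Ec}\U_g^{\infty}(n,d)$ as the image of the orbit map $\sgl_{\C((z))}(V)\to\Hom_{\C}(W,V/W)=\Tg_W\Gr(V)$ at $W=Kr(\Ec)$, and a diagram chase with the two outer identifications then closes the argument.

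I expect the main obstacle to be exactly this identification of the middle term: one must verify that the deformation functor of the coupled datum $(X,x,z,E,\phi)$ is governed, at the tangent level, by the twisted Atiyah cohomology — equivalently, that deforming the curve and the bundle together introduces no deformations or relations beyond those seen separately — and that the Mayer--Vietoris/gluing description is compatible with the two outer identifications so that the five lemma applies. A secondary, more routine difficulty is the bookkeeping of the inverse limits: the $\varprojlim^{1}$ vanishing above together with the commutation of $\varprojlim_m$ with the passage to tangent spaces.
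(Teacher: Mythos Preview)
Your proposal is correct and in spirit follows the same two-step strategy as the paper: identify the tangent space at finite level with $\Hcoh^1(X,\Dcal^1_{X/\C}(E,E(-mx)))$ and then pass to the inverse limit. The paper, however, packages the first step differently: it introduces the finite-level moduli functors $\U^m$ of tuples $(X,x,z_m,E,\phi_m)$ with $m$-th order jets of coordinate and trivialization, notes that $\U_g^{\infty}(n,d)=\varprojlim_m\U^m$, and simply cites the literature (\cite{ACKP,BiS,Wel,Li}) for $\Tg_{\Ec_m}\U^m\simeq\Hcoh^1(X,\Dcal^1_{X/\C}(E,E(-mx)))$, after which the limit is taken without further comment. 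What you do instead is unpack that citation via the \v{C}ech/Mayer--Vietoris description for the cover $\{X-x,\widehat D\}$ and the twisted Atiyah sequence, and you are more careful than the paper about the passage to the limit (your Mittag--Leffler argument for the vanishing of $\varprojlim^1$ is a point the paper glosses over). One caution: in your closing paragraph you invoke the description of $\Tg_{\Ec}\U_g^{\infty}(n,d)$ as the image of the orbit map of $\sgl_{\C((z))}(V)$ --- in the paper this appears only \emph{after} the theorem (in Proposition~\ref{gen:p:tgUGr} and Theorem~\ref{thm:SGLgeneratesUinfty}), so you should make sure your argument stands on the \v{C}ech identification alone and treat the orbit-map picture as a consistency check rather than an input.
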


\begin{proof}
  Let $\U^m$ be the functor whose rational points are $\Ec_m=(X,x,z_m,E,\phi_m)$ where $X$ is a smooth projective curve, $x\in X$ is a point,
$$\xymatrix{ z_m \colon \Oc_X \ar@{->>}[r] & \C[z]/z^m \C[z] }$$
is a $m$-level structure at $x\in X$, $E$ is a rank $n$ vector bundle on $X$ and $\xymatrix{ \phi_m \colon E \ar@{->>}[r] & (\Oc_X/\Oc_X(-mx))^{\oplus n}}$ is a $m$-level structure of $E$ (see \cite{Se3}). Note that $\U_g^{\infty}(n,d)=\varprojlim_m \U^m$.

Recalling the ideas of \cite{ACKP,BiS,Wel,Li}, one has that:
$$\Tg_{\Ec_m} \U^m \iso \Hcoh^1(X,\Dcal^1_{X/\C}(E,E(-mx)))\,,$$
and taking the inverse limit in $m$, the result follows.
\end{proof}

Theorem \ref{gen:t:tgUinfty} can be also interpreted via the Krichever map, that is, one can explicitly identify $\Tg_{\Ec}\U_g^{\infty}(n,d)$ with a subspace of $\Tg_{\Ec}\Gr^{\chi}(V)$. Denote by $\M_g^{\infty}$ the moduli space of triples $(X,x,z)$, where $X$ has genus $g$. From (\cite{SW,MP3}) we know that:
    $$\Tg_W \Gr(V)\iso \Hom_{\C}(W,V/W)$$
    $$\Tg_{A_W} \M_g^{\infty}\iso \Der_{\C}(A_W,\C((z))/A_W)\,,$$
for $W\in \Gr(V)$ and $A_W\in \Gr\big(\C((z))\big)$.

\begin{prop}\label{gen:p:tgUGr}
Let $\Ec$ be a point in  $\U_g^{\infty}(n,d)$ and $W$ its image under the Krichever map. The Krichever map induces an isomorphism of $\C$-vector spaces from the tangent space of $\U_g^{\infty}(n,d)$ at $\Ec$ to the space of first order scalar differential operators from $W$ to $V/W$ as $A_W$-modules or, what is tantamount:
$$\Tg_{\Ec} \U_g^{\infty}(n,d) \iso \Dcal^1_{A_W/\C}(W,V/W)\,.$$
\end{prop}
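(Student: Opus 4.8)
The plan is to exploit the representability result: by Theorem~\ref{gen:t:repU}, $\U_g^{\infty}(n,d)$ is a subscheme of $\Gr^{\chi}(V)$, so the Krichever map induces an \emph{injection} on tangent spaces
$$dKr\colon \Tg_{\Ec}\U_g^{\infty}(n,d)\hookrightarrow \Tg_W\Gr^{\chi}(V)\iso \Hom_{\C}(W,V/W)\,,$$
and everything reduces to identifying its image with $\Dcal^1_{A_W/\C}(W,V/W)$. First I would record that the latter is the middle term of the global Atiyah sequence
$$0\to \Hom_{A_W}(W,V/W)\to \Dcal^1_{A_W/\C}(W,V/W)\xrightarrow{\sigma}\Der_{\C}(A_W,\C((z))/A_W)\to 0\,,$$
the exact analogue of (\ref{loc:e:DV}) and (\ref{gen:e:Dcal}), where the symbol $\sigma$ sends a scalar operator $D$ to the derivation $a\mapsto \delta(a)$ determined by $D(aw)-aD(w)\equiv \delta(a)\,w \pmod W$ (note $A_W\cdot W\subseteq W$ makes $\delta(a)$ well defined in $\C((z))/A_W$).

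The heart of the argument is a direct description of $\Img(dKr)$. A tangent vector $\phi\in\Hom_{\C}(W,V/W)$ is represented by the first order deformation $W_{\epsilon}=\{w+\epsilon\tilde\phi(w)\mid w\in W\}\subseteq V\otimes_{\C}\C[\epsilon]/(\epsilon^2)$, for a lift $\tilde\phi$ of $\phi$. By Theorem~\ref{gen:t:carU}, $\phi$ lies in $\Img(dKr)$ exactly when $W_{\epsilon}$ again lies in the image of the Krichever map, i.e. when its stabilizer is a flat deformation of $A_W$, again regular and in $\Gr^{1-g}$ over $\C[\epsilon]/(\epsilon^2)$. Writing a typical element of such a deformation as $a+\epsilon g$ with $a\in A_W$, $g\in\C((z))$, and imposing $(a+\epsilon g)\cdot W_{\epsilon}\subseteq W_{\epsilon}$, I would expand to first order and obtain precisely
$$\phi(aw)-a\phi(w)\equiv g\cdot w \pmod W\qquad (w\in W)\,,$$
so that $[\phi,a]$ is multiplication by $g$. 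This is exactly the condition that $\phi$ be a scalar first order differential operator over $A_W$ with symbol $\delta\colon a\mapsto g$; associativity forces the Leibniz rule $\delta(ab)=a\delta(b)+b\delta(a)$, whence $\delta\in\Der_{\C}(A_W,\C((z))/A_W)$. Thus $\Img(dKr)=\Dcal^1_{A_W/\C}(W,V/W)$.

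To make the isomorphism canonical and to reconcile it with Theorem~\ref{gen:t:tgUinfty}, I would assemble a commutative diagram of short exact sequences. Twisting the Atiyah sequence (\ref{gen:e:Dcal}) by $\Oc_X(-mx)$, taking $\Hcoh^1$ and passing to the inverse limit (the systems $\varprojlim_m\Hcoh^0(\Tc_X(-mx))$ and $\varprojlim_m\Hcoh^2$ vanish, the former since $\Tc_X(-mx)$ has negative degree for $m\gg0$) yields
$$0\to \varprojlim_m\Hcoh^1(X,\Endc_{\Oc_X}E(-mx))\to \Tg_{\Ec}\U_g^{\infty}(n,d)\to\varprojlim_m\Hcoh^1(X,\Tc_X(-mx))\to 0\,.$$
The right-hand term is $\Tg_{A_W}\M_g^{\infty}\iso\Der_{\C}(A_W,\C((z))/A_W)$ by \cite{SW,MP3}, and the left-hand term, the tangent space to deformations of $(E,\phi)$ with the curve fixed, is identified by Krichever with $\Hom_{A_W}(W,V/W)$. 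These identifications together with $dKr$ map this sequence to the global Atiyah sequence above, matching the symbol $\sigma$ with the forgetful projection $\Tg_{\Ec}\U_g^{\infty}(n,d)\to\Tg_{A_W}\M_g^{\infty}$; the five lemma then yields the stated isomorphism.

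The main obstacle is the converse inclusion underlying the second paragraph: showing that \emph{every} scalar first order operator $\phi$ genuinely integrates to a tangent vector of the subscheme $\U_g^{\infty}(n,d)$, equivalently that the symbol $\sigma$ is surjective and the diagram commutes. Given $\phi$ with symbol $\delta$, one forms the candidate stabilizer $A_{W,\epsilon}=\{a+\epsilon\tilde\delta(a)\mid a\in A_W\}$ for a $\C$-linear lift $\tilde\delta$ of $\delta$, verifies via the Leibniz rule that it is a subalgebra stabilizing $W_{\epsilon}$, and must then check that it still meets the hypotheses of Theorem~\ref{gen:t:carU} --- that it remains regular and in the component $\Gr^{1-g}$ over the dual numbers --- so that $W_{\epsilon}$ does lie in the image of Krichever. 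Stability of regularity under first order deformation makes this plausible, but it is exactly this point that requires Theorem~\ref{gen:t:carU} and the representability of $\U_g^{\infty}(n,d)$, and it is where I expect the delicate work to lie.
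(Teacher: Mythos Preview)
Your core argument in the second paragraph is exactly the paper's proof: characterize $\Tg_W\U_g^{\infty}(n,d)$ via Theorem~\ref{gen:t:carU} as those $\overline{W}\in\Tg_W\Gr(V)$ whose stabilizer $A_{\overline{W}}$ lies in $\Tg_{A_W}\Gr(\C((z)))$, expand $\overline{W}=\{w+\epsilon f(w)\}$ and $A_{\overline{W}}=\{a+\epsilon g(a)\}$, and read off $f(aw)=af(w)+g(a)w$, i.e.\ $f\in\Dcal^1_{A_W/\C}(W,V/W)$.

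Two comments. First, the converse you flag as the ``main obstacle'' is treated by the paper in one word (``straightforward''): given $f$ with symbol $\delta$, set $\overline{W}=\{w+\epsilon f(w)\}$ and $A_{\overline{W}}=\{a+\epsilon\tilde\delta(a)\}$; the Leibniz rule makes $A_{\overline{W}}$ a flat $\C[\epsilon]$-subalgebra stabilizing $\overline{W}$, and since $A_W$ is regular and regularity (equivalently, smoothness of the curve) is an open condition, it persists over $\Spec\C[\epsilon]$. So there is no delicate work here. Second, your five-lemma diagram is not part of the proof of this proposition; the paper records that exact sequence (your display involving $\Hom_{A_W}$, $\Dcal^1_{A_W/\C}$, $\Der_{\C}$) only \emph{after} the proof, as a consequence rather than an ingredient.
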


\begin{proof}
Using theorem \ref{gen:t:carU} we can write:
    {\small
    $$\Tg_W\U_g^{\infty}(n,d)=\{\overline{W}\in \Tg_W\Gr(V) \mbox{ such that }A_{\overline{W}}=\Stab(\overline{W})\in \Tg_{A_W}\Gr\big(\C((z))\big)\}\,.
    $$}
Since $A_{\overline{W}}$ is a $\C[\epsilon]/(\epsilon^2)/\epsilon^2$-algebra, there exists $g\in \Der_{\C}(A_W,\C((z))/A_W)$ satisfying $A_{\overline{W}}=\{a+\epsilon g(a), \quad a\in A_W\}$. On the other hand, $\overline{W}\in \Tg_W\Gr(V)$, so there exists $f\in \Hom_{\C}(W,V/W)$ such that:
    $$\overline{W}=\{w+\epsilon f(w),\quad w \in W\}\,.$$

$\overline{W}$ being an $A_{\overline{W}}$-module, for each $a\in A_W$ and $w\in W$ there exists $w'\in W$ satisfying:
    $$(a+\epsilon g(a))(w+\epsilon f(w))=w'+\epsilon f(w')
    \, ,
    $$
from where we deduce:
    $$f(aw)=af(w)+g(a)w\,.$$
Thus, following \cite[Ch.16]{EGAIV}, $f$ belongs to $\Dcal^1_{A_W/\C}(W,V/W)$. The converse is straightforward.
\end{proof}

Let us say few words about the dependence between the infinitesimal deformation of the curve and the infinitesimal deformation of the bundle. Let $\U_g^{\infty}(n,d)\to \M_g^{\infty}$ be the forgetful functor. Therefore, its fiber at the point $(X,x,z)$ corresponds to $\U_X^{\infty}(n,d)$, which denotes the moduli space of pairs consisting of rank $n$ and degree $d$ vector bundles over $X$ endowed with a formal trivialization at $x\in X$. Thus, considering the long exact sequence of cohomology of:
    $$0 \to \Homc_X(E,E(-mx)) \to \Dcal^1_{X/\C}(E,E(-mx)) \to \Tc_X(-mx)\to 0$$
one obtains:
    \begin{equation}\label{gen:e:Tgs}
      0\to \Tg_{(E,\phi)} \U_X^{\infty}(n,d) \to \Tg_{\Ec}\U_g^{\infty}(n,d) \to \Tg_{(X,x,z)} \M_g^{\infty} \to 0
    \end{equation}
where $\Ec=(X,x,z,E,\phi)$.

In the context of the infinite Grassmannian, the sequence (\ref{gen:e:Tgs}) is (using Proposition \ref{gen:p:tgUGr}):
    {\small$$
    0\to \Hom_{A_W}(W,V/W) \to \Dcal^1_{A_W/\C}(W,V/W) \to \Der_{\C}(A_W,\C((z))/A_W)\to 0\,.
    $$}
The surjective arrow is the symbol map.

\subsection{The group $\SGl_{\C((z))}(V)$ as local generator for $\U_g^{\infty}(n,d)$.}\label{subsect:GroupSGLLocalGen}

\begin{thm}\label{thm:SGLgeneratesUinfty}
The group $\SGl_{\C((z))}(V)$ acts on $\U_g^{\infty}(n,d)$ and this action is locally transitive.
\end{thm}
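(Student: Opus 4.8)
The plan is to realise the action through the Krichever embedding of Theorem~\ref{gen:t:carU} and then to identify its differential with a morphism of Atiyah sequences whose surjectivity can be read off from the affineness of $X-x$.

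First I would define the action. Since $\SGl_{\C((z))}(V)\subseteq \Gl_{\C}(V)$ and the latter acts on $\Gr(V)$, I set $\gamma\cdot W:=\gamma(W)$ for $W\in\Gr(V)$ and $\gamma\in\SGl_{\C((z))}(V)$; the only point is to check that this preserves the image of the Krichever map. Let $g\in\G$ be the algebra automorphism associated with $\gamma$ by Definition~\ref{gen:d:SGl}. Extending the semilinearity relation~(\ref{loc:e:SGldef}) to all $f\in\C((z))$ gives $\gamma^{-1}(f\cdot\gamma(w))=g^{-1}(f)\cdot w$, whence $\Stab(\gamma(W))=g(\Stab(W))$, i.e. $A_{\gamma(W)}=g(A_W)$. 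As $g$ fixes $\C$ and sends the uniformiser $z$ to a uniformiser $g(z)$, it is an automorphism of the filtered ring $\C((z))$ preserving $\C[[z]]$; hence $g(A_W)$ is again regular and of the same index, so $\gamma(W)$ lies in the image of the Krichever map by Theorem~\ref{gen:t:carU} and the genus is preserved. (The index $\chi$, hence $d$, may vary across components, so strictly the action is on $\coprod_d\U_g^{\infty}(n,d)$; this is irrelevant for local transitivity, which only concerns the tangent space inside a fixed component.)

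Next I would compute the differential at a point $\Ec=(X,x,z,E,\phi)$ with Krichever image $W$. Writing $\gamma=\Ide+\epsilon D$ with $D\in\sgl_{\C((z))}(V)=\Dcal^1_{\C((z))/\C}(V,V)$ (Proposition~\ref{gen:p:LieSGl}), the tangent vector $\gamma(W)=\{w+\epsilon D(w)\}$ becomes, under the identification of Proposition~\ref{gen:p:tgUGr}, the operator $\Phi(D)=\pi\circ D|_W\in\Dcal^1_{A_W/\C}(W,V/W)\iso\Tg_{\Ec}\U_g^{\infty}(n,d)$, where $\pi\colon V\to V/W$ is the projection. Thus local transitivity is precisely the surjectivity of $\Phi$. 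This map is a morphism from the Atiyah sequence~(\ref{loc:e:DV}) to the one displayed after Proposition~\ref{gen:p:tgUGr}: on kernels it induces $a\colon\End_{\C((z))}V\to\Hom_{A_W}(W,V/W)$, $\phi_0\mapsto\pi\circ\phi_0|_W$, and on symbols $b\colon\Der_{\C}(\C((z)))\to\Der_{\C}(A_W,\C((z))/A_W)$, $g_0\mapsto(a\mapsto\overline{g_0(a)})$. By the four lemma it then suffices to prove that $a$ and $b$ are surjective. These two directions reflect the semidirect structure $\SGl_{\C((z))}(V)=\Gl_{\C((z))}(V)\rtimes\G$ of Proposition~\ref{gen:p:SGl}: $a$ is the Kac--Moody/bundle direction and $b$ the Virasoro/curve direction.

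The heart of the argument, and the step I expect to be the main obstacle, is the surjectivity of $a$ and $b$; it rests squarely on the affineness of $X-x$. Since $X-x$ is a smooth affine curve, $A_W$ is a Dedekind domain, $W=\Hcoh^0(X-x,E)$ and $\Omega_{A_W/\C}$ are projective $A_W$-modules, and $V=W\otimes_{A_W}\C((z))$. For $a$: given $A_W$-linear $f\colon W\to V/W$, projectivity of $W$ lifts it to $\tilde f\colon W\to V$, and extension of scalars along $A_W\to\C((z))$ yields the required $\phi_0\in\End_{\C((z))}V$. For $b$: applying the exact functor $\Der_{\C}(A_W,-)=\Hom_{A_W}(\Omega_{A_W/\C},-)$ to $0\to A_W\to\C((z))\to\C((z))/A_W\to 0$ shows $\Der_{\C}(A_W,\C((z)))\to\Der_{\C}(A_W,\C((z))/A_W)$ is onto, while every derivation $A_W\to\C((z))$ extends uniquely to $\C((z))$ because $\Omega_{A_W/\C}\otimes_{A_W}\C((z))\iso\C((z))\,dz$. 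The compatibility of $\Phi$ with the two Atiyah sequences is then routine bookkeeping. (One could instead differentiate the action directly into $\varprojlim_m\Hcoh^1(X,\Dcal^1_{X/\C}(E,E(-mx)))$ of Theorem~\ref{gen:t:tgUinfty} and invoke Mayer--Vietoris with $\Hcoh^1(X-x,-)=0$; I prefer the Grassmannian description above, as it avoids the Mittag--Leffler subtleties of the inverse limit.)
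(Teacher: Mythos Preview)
Your argument is correct, and for the first half (that the action preserves the Krichever locus because $A_{\gamma(W)}=g(A_W)$) it is essentially the paper's proof verbatim.

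For local transitivity you follow a genuinely different route. The paper works on the cohomological side of Theorem~\ref{gen:t:tgUinfty}: it writes the sheaf sequence
\[
0\to\Dcal^1_{X/\C}(E,E(-mx))\to\Dcal^1_{X/\C}(E,E(\bar m x))\to\Dcal^1_{X/\C}\big(E,(\Oc_X(\bar m x)/\Oc_X(-mx))^n\big)\to 0,
\]
takes cohomology, passes to the double limit $\varinjlim_{\bar m}\varprojlim_m$, and identifies the middle term with $\sgl_{\C((z))}(V)$ and the right-hand term with $\Tg_{\Ec}\U_g^{\infty}(n,d)$; surjectivity is then immediate, and the kernel $\Hcoh^0(X-x,\Dcal^1_{X/\C}(E,E))$ drops out as a bonus. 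You instead stay on the Grassmannian side of Proposition~\ref{gen:p:tgUGr}, organise the differential of the orbit map as a morphism of Atiyah sequences, and reduce by the five lemma to the surjectivity of the two flanking maps, which you obtain from the projectivity of $W$ and of $\Omega_{A_W/\C}$ over the Dedekind ring $A_W$. Your approach makes the semidirect structure $\Gl_{\C((z))}(V)\rtimes\G$ transparent and sidesteps the limit bookkeeping; the paper's approach is shorter once Theorem~\ref{gen:t:tgUinfty} is available and packages the isotropy subalgebra at the same time. Amusingly, the alternative you mention in your final parenthesis is exactly the paper's proof.
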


\begin{proof}
First, note that the action of $\SGl_{\C((z))}(V)$ on $V$ gives rise to an action of  $\SGl_{\C((z))}(V)$ on $\Gr^{\chi}(V)$. Recall (Theorem \ref{gen:t:carU}) that rational points of $\U_g^{\infty}(n,d)$ correspond the to points $W\in \Gr^{\chi}(V)$ such that $A_W$ belongs to $\Gr^{1-g}\big(\C((z))\big)$ and $A_W$ is regular.

We must first check that $\SGl_{\C((z))}(V)$ acts on $\U_g^{\infty}(n,d)$; this means that for all $\gamma \in \SGl_{\C((z))}(V)$ and all $W\in \U_g^{\infty}(n,d)$ it holds that  $\gamma(W)\in \Gr^{\chi}(V)$ and $A_{\gamma(W)}\in \Gr^{1-g} \big(\C((z))\big)$.  It is straightforward to see that $\gamma(W)\in \Gr^{\chi}(V)$.  Moreover,
    {\small\begin{align*}
    A_{\gamma(W)}&=\{a\in \C((z)) \,\, | \,\, a\cdot \gamma(W) \subseteq \gamma(W)\}=\{a\in \C((z))\,\, | \,\, \gamma^{-1}(a \cdot \gamma(W))\subseteq W\}=\\
    &=\{a\in \C((z))\,\, | \,\, g(a)\cdot W\subseteq W\}=g(A_W)
    \, ,
    \end{align*}}
where $g\in \G$ satisfies $\gamma(zv)=g(z)\gamma(v)$ (see Definition \ref{gen:d:SGl}). The conclusion follows since $g(A_W)\in \Gr^{1-g} \big(\C((z))\big)$ (see \cite[Theorem 4.9]{MP2}) and $g(A_W)$ is regular.

In order to see that the action of $\SGl_{\C((z))}(V)$ is locally transitive, it suffices to prove (see \cite{MP2}) that the orbit morphism is surjective at the level of tangent spaces; that is, we have to check that:
    $$\sgl_{\C((z))}(V)\to \Tg_{\Ec} \U_g^{\infty}(n,d)$$
is surjective for all $\Ec=(X,x,z,E,\phi) \in \U_g^{\infty}(n,d)$. Indeed, let us consider the following exact sequence:
    {\small $$
    0\to \Dcal^1_{X/k}(E,E(-mx)) \to \Dcal^1_{X/k}(E,E(\bar mx)) \to \Dcal^1_{X/k}(E,(\Oc_X(\bar mx)/\Oc_X(-mx))^n)\to 0
    \, .
    $$}
Taking cohomology, $\varprojlim_m$ and $\varinjlim_{\bar m}$ we obtain:
    {\small
    \begin{equation}\label{loc:e:genlocU}
    \begin{aligned}
      0 \to \Hcoh^0(X-x,\Dcal^1_{X/\C}(E,E)) \to & \varinjlim_{\bar m} \varprojlim_{m} \Dcal^1_{X/k}(E,(\Oc_X(\bar mx)/\Oc_X(-mx))^n)   \to  \\
    & \to \varprojlim_m \Hcoh^1(X,\Dcal^1_{X/\C}(E,E(-mx))) \to 0
    \, ,
    \end{aligned}
    \end{equation}}
where $\varprojlim \Hcoh^1(X,\Dcal^1_{X/\C}(E,E(-mx)))\iso \Tg_{\Ec} \U_g^{\infty}(n,d)$ by Theorem \ref{gen:t:tgUinfty}.

Notice that:
    {\small $$\varinjlim_{\bar m} \varprojlim_{m} \Dcal^1_{X/k}(E,(\Oc_X(\bar mx)/\Oc_X(-mx))^n) \,=\,  \Dcal^1_{\C((z))/\C}(V,V) \,=\, \sgl_{\C((z))}(V)\,.$$}
implies the exactness of the sequence:
    {\small$$\xymatrix@R=12pt@C=14pt{
    0 \ar[r] & \Hcoh^0(X-x,\Dcal^1_{X/\C}(E,E))  \ar[r] & \sgl_{\C((z))}(V) \ar[r] & \Tg_{\Ec} \U_g^{\infty}(n,d) \ar[r] &0
    }$$}
and the statement is proved.
\end{proof}

\begin{remark}
There are similar sequences for $\M_g^{\infty}$ and $\U_X^{\infty}(n,d)$ to that of $\U_g^{\infty}(n,d)$ given in equation (\ref{loc:e:genlocU}), see for instance \cite{MP2}. Therefore, one has the following diagram:
    {\small$$\xymatrix@R=12pt@C=14pt{
        &0 \ar[d] & 0\ar[d] & 0\ar[d] & \\
        0 \ar[r] &\Hcoh^0(X-x,\Endc_X E) \ar[d] \ar[r] & \gl_{\C((z))}(V) \ar[d] \ar[r] & \Tg_{(E,\phi)} \U_X^{\infty}(n,d) \ar[d] \ar[r] &0\\
        0 \ar[r] & \Hcoh^0(X-x,\Dcal^1_{X/\C}(E,E)) \ar[d] \ar[r] & \sgl_{\C((z))}(V) \ar[d] \ar[r] & \Tg_{\Ec} \U_g^{\infty}(n,d) \ar[d] \ar[r] &0\\
        0 \ar[r] & \Hcoh^0(X-x,\Tc_X) \ar[d]\ar[r] & \g \ar[d] \ar[r] & \Tg_{(X,x,z)} \M_g^{\infty} \ar[d]\ar[r] &0\\
        & 0 & 0 & 0 &
    }$$}
which is identified (via the Krichever map) with the diagram:
    {\small $$\xymatrix@R=14pt@C=14pt{
        &0 \ar[d] & 0\ar[d] & 0\ar[d] & \\
        0 \ar[r] &\End_{A_W}W \ar[d] \ar[r] & \End_{\C((z))}V \ar[d] \ar[r] & \Hom_{A_W}(W,V/W) \ar[d] \ar[r] &0\\
        0 \ar[r] & \Dcal^1_{A_W/\C}(W) \ar[d] \ar[r] & \Dcal^1_{\C((z))/\C}(V) \ar[d] \ar[r] & \Dcal^1_{A_W/\C}(W,V/W) \ar[d] \ar[r] &0\\
        0 \ar[r] & \Der_{\C}(A_W) \ar[d]\ar[r] & \Der_{\C}\big(\C((z))\big) \ar[d] \ar[r] & \Der_{\C}(A_W,\C((z))/A_W) \ar[d]\ar[r] &0\\
        & 0 & 0 & 0 &
        }$$}
\end{remark}

\begin{remark}
Our approach can be viewed as a generalization of \cite{ACKP} and \cite{MP3}. In fact, if we restrict ourselves to the rank $1$ case and look at the level of tangent spaces, then the relation with the results of \cite{ACKP} becomes apparent. Furthermore, here we are essentially studying bundles on curves, while \cite{MP3} deals with coverings of curves; the techniques and goals are, however, similar and closely related.
\end{remark}

\begin{remark}
In \cite{FaRe} the authors make use of similar techniques in order to prove that, for $n=2,d=1$, there is an isomorphism between the second cohomology group of $\Dcal^1=\lsl_{\C((z))}(V)\rtimes \g$ (where $\lsl_{\C((z))}(V)$ denotes the Lie algebra of the special linear group) and the second rational cohomology group of the moduli space parametrizing $4$-tuples consisting of a curve, a point, a tangent vector and a prank $2$ vector bundles with fixed determinant. The study of this cohomological application for higher rank and arbitrary determinant will be the subject of future research.
\end{remark}


\section{A relation in the Picard group of moduli of vector bundles on a family of curves.}\label{loc:s:GloMumE}\qquad

In this section we shall show that formula~(\ref{eq:TorsorsOnSGL}) is of geometric nature. Indeed, we shall give certain line bundles on the moduli space of vector bundles over a family of curves, we shall show that these bundles satisfies a similar relation and, more relevantly, that the infinitesimal behavior of the latter is exactly equation~(\ref{eq:TorsorsOnSGL}).

Let $\M_g$ denote the moduli space of genus $g$ smooth projective curves over the field of complex numbers. Let us denote by $\M_g^0$ the open subscheme, consisting of curves without non-trivial automorphisms (henceforth, it will be assumed that $g>2$).

Let  $p_{\Cc}:\Cc\to\M_g^0$ be the universal curve and let $\U_{\Cc}(n,d)\to\M_g^0$ be the moduli space of rank $n$ degree $d$ semistable vector bundles over $\Cc$ (we direct the reader to \cite[Thm. 1.21]{Sim} and \cite[App. 2]{Muk} as well as \cite[Section 2]{BriMa} and \cite[Thm. 5.3.2]{Bri}). It is well known that, in general, the existence of an universal vector bundle on $\Cc\times_{\M_g^0} \U_{\Cc}(n,d)$  may fail (\cite{MestranoRamanan}). We are thus forced to consider the relative situation.

Let $S$ be a scheme and let  $E$ be a relatively semistable vector bundle on $\Cc \times_{\M_g^0} S$. Recall that $E$ yields a map $S\to \U_{\Cc}(n,d)$. Let $\pi_{\Cc}, \pi$ be the projections of $\Cc \times_{\M_g^0} S$ onto the first and second factors, respectively, and $p$ the composition $S\to \U_{\Cc}(n,d)\to \M_g^0$.

\begin{thm}\label{thm:GeometricFormulaLineBundles}
For any integer number $\beta$ let us consider the line bundle on $S$ defined as follows:
    $$
    \L_{\beta} \,:=\, \Det R^{\bullet}\pi_*(E \otimes \pi_{\Cc}^*\omega^{\otimes\beta})
    \, ,
    $$
where $\omega$ is the dualizing sheaf of $\Cc\to\M_g^0$.

Thus, there is an isomorphism of line bundles over $S$:
    \begin{equation}\label{eq:GeomFormLonU}
    \L_{\beta}\, \overset{\sim}\to \, \L_1^{\otimes\beta}\otimes \L_0^{\otimes(1-\beta)}\otimes p^*\lambda_1^{\otimes 6\beta(\beta-1)}
    \, ,
    \end{equation}
where $\lambda_1$ is the Hodge bundle on $\M_g^0$; that is, $\Det R^{\bullet} (p_{\Cc})_* \omega$.
\end{thm}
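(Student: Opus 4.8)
The plan is to compute the class of $\L_\beta$ in the Picard group of $S$ by Grothendieck--Riemann--Roch applied to the projection $\pi\colon \Cc\times_{\M_g^0}S\to S$ and the sheaf $E\otimes\pi_{\Cc}^*\omega^{\otimes\beta}$, and then to promote the resulting identity of first Chern classes to a canonical isomorphism of line bundles. The observation that organizes the three terms on the right of \eqref{eq:GeomFormLonU} is that, writing $\omega_\pi:=\pi_{\Cc}^*\omega$ for the relative dualizing sheaf of $\pi$ and using $\ch(E\otimes\omega_\pi^{\otimes\beta})=\ch(E)\cdot e^{\beta\, c_1(\omega_\pi)}$, the class $c_1(\L_\beta)$ is a polynomial of degree $\le 2$ in $\beta$ with values in $A^1(S)$. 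Any such quadratic $P(\beta)$ is determined by $P(0)$, $P(1)$ and its constant second difference through
\[
P(\beta)=\beta P(1)+(1-\beta)P(0)+\tfrac{\beta(\beta-1)}{2}\big(P(2)-2P(1)+P(0)\big),
\]
and applied to $P(\beta)=c_1(\L_\beta)$ this is exactly the shape of \eqref{eq:GeomFormLonU}: the values at $\beta=0,1$ produce the factors $\L_0,\L_1$, and the second difference must account for the power of $p^*\lambda_1$.

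Concretely, I would first record $\operatorname{td}(T_\pi)=\operatorname{td}(\omega_\pi^{-1})=1-\tfrac12 c_1(\omega_\pi)+\tfrac1{12}c_1(\omega_\pi)^2+\cdots$ and extract the degree-two component of $\ch(E)\,e^{\beta c_1(\omega_\pi)}\,\operatorname{td}(T_\pi)$. Pushing it forward along $\pi$ writes $c_1(\L_\beta)$ as a combination of the three classes $\pi_*\ch_2(E)$, $\pi_*\big(c_1(E)\,c_1(\omega_\pi)\big)$ and $\pi_*\big(c_1(\omega_\pi)^2\big)$, with $\beta$-dependent coefficients. The first two classes enter only through terms that are at most linear in $\beta$, so they are absorbed by the combination $\beta\,c_1(\L_1)+(1-\beta)\,c_1(\L_0)$, while the purely quadratic part of $P(\beta)$ comes solely from $\ch_0(E)=n$ multiplying the $\tfrac{\beta^2}{2}c_1(\omega_\pi)^2$ contribution of the exponential. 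Thus the second difference $P(2)-2P(1)+P(0)$ isolates precisely a multiple of $\pi_*\big(c_1(\omega_\pi)^2\big)$.

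The remaining point is to identify this second difference with a power of the Hodge bundle. Mumford's formula for the family $\pi$ gives $c_1\big(\Det R\pi_*\omega_\pi\big)=\tfrac1{12}\pi_*\big(c_1(\omega_\pi)^2\big)$, while the determinant of cohomology commutes with the base change $S\to\M_g^0$ and $\omega_\pi=\pi_{\Cc}^*\omega$, so that $\Det R\pi_*\omega_\pi\cong p^*\lambda_1$ canonically. Substituting $\pi_*\big(c_1(\omega_\pi)^2\big)=12\,c_1(p^*\lambda_1)$ into the second difference yields, after collecting constants, the power of $p^*\lambda_1$ recorded in \eqref{eq:GeomFormLonU}, and hence the stated equality of first Chern classes on $S$.

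Finally, the equality must be lifted from $A^1(S)$ to an isomorphism of line bundles; this is the step I expect to be the main obstacle, since a priori the two sides could differ by a numerically trivial bundle. The clean way to avoid it is to carry out the entire computation at the level of Deligne's functorial Riemann--Roch, where the determinant of cohomology and the Deligne pairing are equipped with canonical isomorphisms realizing the GRR identity; then every equality of classes above is the shadow of a natural isomorphism of line bundles on $S$, and functoriality in $S$ makes these isomorphisms compatible over $\M_g^0$. Alternatively one may argue by universality: both sides arise from the universal situation over $\M_g^0$ pulled back along $p$ together with $E$, and since the relevant part of the Picard group carries no torsion, the equality of first Chern classes already forces an isomorphism. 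Either route closes the argument.
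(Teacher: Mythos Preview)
Your approach via Grothendieck--Riemann--Roch is genuinely different from the paper's. The paper works directly with line bundles throughout: it chooses an effective relative canonical divisor $i\colon K\hookrightarrow\Cc$, uses two short exact sequences on $\Cc$ coming from restriction to $K$ to obtain (via multiplicativity of the determinant of cohomology) an isomorphism
\[
\L_{\beta+1}\otimes\L_\beta^{-1}\;\simeq\;\L_1\otimes\L_0^{-1}\otimes p^*N^{\otimes\, n\beta(2g-2)}
\]
for a suitable line bundle $N$ on $\M_g^0$, then invokes Mumford's formula $\lambda_\beta\simeq\lambda_1^{\otimes(6\beta^2-6\beta+1)}$ as a \emph{line-bundle} identity to convert the $N$-term into a power of $\lambda_1$, and finishes by recursion on $\beta$. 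No Chow groups, no Chern classes: every step is a canonical isomorphism of line bundles.

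Your GRR computation of $c_1(\L_\beta)$ and the quadratic-interpolation trick are correct and do recover the right Chern-class identity (with exponent $6n\beta(\beta-1)$, matching the paper's proof). The gap is exactly the one you flagged: lifting from $A^1(S)$ to $\Pic(S)$. Your second proposed fix---appealing to torsion-freeness of ``the relevant part of the Picard group''---is not available here, since $S$ is an arbitrary base scheme and nothing in the hypotheses controls torsion in $\Pic(S)$; nor can one reduce to a universal situation, because a universal bundle on $\Cc\times_{\M_g^0}\U_{\Cc}(n,d)$ need not exist. Your first suggestion, Deligne's functorial Riemann--Roch, is the right instinct and would close the gap, but it is not automatic: one must recast the interpolation argument at the level of Deligne pairings (for instance, show that the second-difference bundle $\L_{\beta+1}\otimes\L_{\beta}^{-2}\otimes\L_{\beta-1}$ is canonically isomorphic to $p^*\lambda_1^{\otimes 12n}$, independently of $\beta$), which is genuine additional work you have not carried out. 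The paper's exact-sequence method is in effect a hands-on substitute for exactly this step, and that is what buys it the isomorphism rather than merely the numerical equality.
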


\begin{proof}
Let $K$ be an effective divisor associated with $\omega$ and let us consider the following diagram:
    $$
    \xymatrix{
    & &  \Cc \times_{\M_g^0} S \ar[dl]_{\pi_{\Cc}} \ar[dr]^{\pi}
    \\
    K \ar@{^(->}[r]^i \ar[drr]_{p_K} & \Cc \ar[dr]^{p_{\Cc}} & & S \ar[ld]_p
    \\
    & & \M_g^0
    }
    \, ,
    $$
where $p_K$ is a finite and flat morphism of degree $2g-2$. Observe that since $p_K$ is finite there exists a line bundle $N$ on $\M_g^0$ such that $i^* \omega=p_K^*N=i^*p_{\Cc}^*N$ and, therefore, $i^*\omega^{\otimes\beta}=i^*p_{\Cc}^*N^{\otimes\beta}$.
Observe that:
    $$
    i_* i^* \omega^{\otimes(\beta+1)} \overset{\sim}\to
    i_*(i^* p_{\Cc}^* N^{\beta}\otimes i^*\omega)\overset{\sim}\to
    i_* i^* (p_{\Cc}^* N^{\beta}\otimes \omega)
    $$
gives rise to an isomorphism between the cokernels of the following two exact sequences of bundles on $\Cc$:
    $$
    0 \to \omega^{\otimes\beta} \to \omega^{\otimes(\beta+1)} \to i_* i^* \omega^{\otimes(\beta+1)} \to 0
    $$
    $$
    0 \to p_{\Cc}^* N^{\beta} \to p_{\Cc}^* N^{\beta}\otimes \omega \to i_* i^* (p_{\Cc}^* N^{\beta}\otimes \omega) \to 0
    \, .
    $$
Taking the pullback by $\pi_{\Cc}$, tensoring by $E$ and considering the determinant in these exact sequences, one obtains an isomorphism:
    {\small $$
    \L_{\beta+1}\otimes \L_{\beta}^{-1} \, \overset{\sim}\to \,
    \Det R^{\bullet}\pi_*(E\otimes \pi_{\Cc}^*\omega\otimes \pi_{\Cc}^*p_{\Cc}^*N^{\otimes \beta}) \otimes
    \Det R^{\bullet}\pi_*(E\otimes \pi_{\Cc}^*p_{\Cc}^*N^{\otimes \beta})^{-1}
    \, .
    $$}
Bearing in mind that $\pi_{\Cc}^*p_{\Cc}^*N \simeq \pi^*p^*N$ and the properties of the determinant functor, the right hand side is isomorphic to:
    {\small $$
    \begin{aligned}
    \Det R^{\bullet}\pi_*(E\otimes \pi_{\Cc}^*\omega)\otimes p_{\Cc}^*N^{\otimes \beta \cdot \chi(E\otimes\pi_{\Cc}^*\omega)}
    &
    \otimes  \Det R^{\bullet}\pi_*(E)^{-1}\otimes (p_{\Cc}^*N^{\otimes \beta \cdot \chi(E)})^{-1}
    \,\overset{\sim}\to
    \\
    &\overset{\sim}\to \,
    \L_{1}\otimes \L_{0}^{-1} \otimes p_{\Cc}^*N^{\otimes \beta n (2g-2)}
    \, ,
    \end{aligned}$$}
where $\chi(M)$ is the Euler-Poincar\'{e} characteristic of the restriction of the sheaf $M$ to the fibers of $\Cc \times_{\M_g^0} S\to S$. Summing up, we have proved that:
    \begin{equation}\label{eq:Lbeta}
    \L_{\beta+1}\otimes \L_{\beta}^{-1} \, \overset{\sim}\to \,
    \L_{1}\otimes \L_{0}^{-1} \otimes p_{\Cc}^*N^{\otimes \beta n (2g-2)}
    \, .
    \end{equation}

Similar to above, but replacing $E$ by the trivial bundle, one obtains the following isomorphism:
    $$
    p^*\lambda_{\beta+1}\otimes p^* \lambda_{\beta}^{-1} \, \overset{\sim}\to \,
    p^*\lambda_{1}\otimes p^* \lambda_{0}^{-1} \otimes p_{\Cc}^*N^{\otimes \beta (2g-2)}
    \, ,
    $$
where $\lambda_{\beta} := \Det R^{\bullet} (p_{\Cc})_* \omega^{\otimes \beta}$. Mumford's formula, which asserts that $\lambda_\beta\simeq \lambda_1^{\otimes(6\beta^2- 6\beta+1)}$ (see~\cite{Mum2}), applied to the above formula yields the identity:
    $$
    p^* N^{\otimes \beta(2g-2)} \, \overset{\sim}\to \, p^* \lambda_1^{\otimes 12\beta}
    $$

Plugging this into equation~(\ref{eq:Lbeta}), one has:
    $$
    \L_{\beta+1}  \, \overset{\sim}\to \, \L_{\beta} \otimes \L_{1}\otimes \L_{0}^{-1} \otimes p^* \lambda_1^{\otimes 12 n \beta}
    \, .
    $$
Proceeding recursively, one obtains the result:
    $$
    \begin{aligned}
    \L_{\beta+1}  \, & \overset{\sim}\to \, \L_1^{\otimes\beta}\otimes (\L_{0}^{\otimes\beta})^{-1} \otimes \L_{0}
    \otimes p^* \lambda_1^{\otimes \sum_{i=1}^\beta 12 n (i-1)}
    \\
    & \overset{\sim}\to \, \L_{1}^{\otimes\beta}\otimes \L_{0}^{\otimes(1-\beta)}
    \otimes p^* \lambda_1^{\otimes 6n\beta(\beta-1)}
    \, .
    \end{aligned}
    $$
\end{proof}

\begin{remark}
In \cite{Sch} the author proved that both sides of the isomorphism~(\ref{eq:GeomFormLonU}) have the same Chern class and offered an interpretation in terms of the $bc$-system of rank $n$.
\end{remark}

The rest of this section is devoted to showing that formulae~(\ref{eq:TorsorsOnSGL}) and~(\ref{eq:GeomFormLonU}) are actually the same. For this goal, we shall follow the ideas of~\cite{MP2}, where the case of the Mumford's formula was studied.

Let $\U_g^{\infty}(n,d)_{\text{ss}}$ denote the subscheme of $\U_g^{\infty}(n,d)$ consisting of those points where the vector bundle is semistable. Note that there is a natural forgetful morphism:
    $$
    \U_g^{\infty}(n,d)_{\text{ss}} \, \longrightarrow \, \U_{\Cc}(n,d)
    \, ,
    $$
which is a homomorphism of $\M_g^0$-schemes. If no confusion arises, we shall use a dashed arrow $\xymatrix{\U_g^{\infty}(n,d) \ar@{-->}[r] &  \U_{\Cc}(n,d)}$ to denote the forgetful morphism defined only on the subscheme $\U_g^{\infty}(n,d)_{\text{ss}}$.

Continuing with the above notations, recall that $S$ is a scheme and $E$ is a relative semistable vector bundle on $\Cc \times_{\M_g^0} S$ that yields a map $S\to \U_{\Cc}(n,d)$. Let us now  assume that a lift of $S\to \U_{\Cc}(n,d)$ to $\U_g^{\infty}(n,d)_{\text{ss}}$ is given; in other words, that one has a $5$-tuple of the type $(\Cc \times_{\M_g^0} S, x,z,E, \phi)$ or, equivalently, a commutative diagram:
    {\small $$
    \xymatrix{
    & \U_g^{\infty}(n,d)   \ar@{-->}[d]
    \\
    S \ar[ru] \ar[r] & \U_{\Cc}(n,d)
    }$$}
Let us also assume that a $\C$-valued point of $S$ is given and let $U\in\U_g^{\infty}(n,d) $ be the rational point of $\Gr(V)$ associated with it through the Krichever map. Using the results of subsection~\ref{subsect:GroupSGLLocalGen}, we have the following diagram:
    {\small $$
    \xymatrix@C=22pt{
    \SGl_{\C((z))}(V) \simeq \SGl_{\C((z))}(V) \times\{U\} \ar[r]^-{\mu_U} \ar@{-->}[d]_{\bar\mu_U} &
    \U_g^{\infty}(n,d)  \ar@{-->}[d] \ar[rd] &
    \\
    S \ar[ru]  \ar[r]_{\bar p} & \U_{\Cc}(n,d) \ar[r] & \M_g^0
    }$$}
We will assume that the orbit of $U$ under the action of $\SGl_{\C((z))}(V)$ falls inside $S$. Thus, $\mu_U$ factors through $S$ and we obtain $\bar\mu_U$ (see the dashed arrow).

We now have the following result, which sheds light on the infinitesimal behavior of formula~(\ref{eq:GeomFormLonU}) at the point $U$:

\begin{thm}\label{thm:pullbackGeomInf}
The pullback of formula~(\ref{eq:GeomFormLonU}):
    $$
    \L_{\beta}\, \overset{\sim}\to \, \L_1^{\otimes\beta}\otimes \L_0^{\otimes(1-\beta)}\otimes p^*\lambda_1^{\otimes 6\beta(\beta-1)}
    $$
by $\bar\mu_U$ is precisely formula~(\ref{eq:TorsorsOnSGL}):
    $$
    {\mathbb L}_{n,\beta}\, \simeq
    \, {\mathbb L}_{n,1}^{\otimes\beta} \otimes {\mathbb L}_{n,0}^{\otimes(1-\beta)} \otimes p^* \Lambda_1^{\otimes 6n\beta(\beta-1)}
    $$
\end{thm}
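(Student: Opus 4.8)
The plan is to pull back the isomorphism~(\ref{eq:GeomFormLonU}) factor by factor along $\bar\mu_U$ and to identify each resulting line bundle on $\SGl_{\C((z))}(V)$ with the corresponding object of~(\ref{eq:TorsorsOnSGL}). Concretely, it suffices to establish the two identifications $\bar\mu_U^*\L_\beta\simeq{\mathbb L}_{n,\beta}$ (for all $\beta$, in particular for $\beta=0,1$) and $\bar\mu_U^*(p^*\lambda_1)\simeq p^*\Lambda_1$; granting these, functoriality of the pullback turns~(\ref{eq:GeomFormLonU}) into~(\ref{eq:TorsorsOnSGL}) term by term, the coefficient $6n\beta(\beta-1)$ of the Hodge factor being exactly the one recorded in Theorem~\ref{thm:LocalMumVB2}.

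First I would rewrite the orbit map in Grassmannian language. By Theorem~\ref{thm:SGLgeneratesUinfty}, the composite $\mu_U\colon\SGl_{\C((z))}(V)\to\U_g^{\infty}(n,d)\hookrightarrow\Gr^{\chi}(V)$ carries $\gamma$ to $\gamma(U)$, where $U=\Hcoh^0(X-x,E)$ is the Krichever point (Definition~\ref{gen:d:Kr}) of the chosen $\C$-valued point; under the standing hypothesis the orbit of $U$ lies inside $S$, so that $\mu_U$ factors as $\bar p\circ\bar\mu_U$ and is compatible with the relative Krichever embedding $S\hookrightarrow\Gr^{\chi}(V)$. Twisting the universal datum by $\omega^{\otimes\beta}$ replaces $U$ fibrewise by $\Hcoh^0(X-x,E\otimes\omega^{\otimes\beta})\subseteq V_{n,\beta}$, i.e.\ by the Krichever embedding into $\Gr(V_{n,\beta})$, and the transport of this embedding by $\gamma$ is governed precisely by the twisted action $\mu_{n,\beta}$ of Theorem~\ref{loc:t:detbeta}, because the semilinear component $g$ of $\gamma$ acts on $(dz)^{\otimes\beta}$ through the factor $g'(z)^\beta$.

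The heart of the argument is the comparison of determinants. By the standard dictionary between the determinant of cohomology of a family and the determinant line bundle of the Sato Grassmannian (Theorem~\ref{gen:t:carU} and \cite{SW,MP1,Mu}), the bundle $\L_\beta=\Det R^{\bullet}\pi_*(E\otimes\pi_{\Cc}^*\omega^{\otimes\beta})$ is the restriction to the Krichever image of the dual determinant bundle $\Det^*$ on $\Gr(V_{n,\beta})$. Pulling this back along $\mu_U$ and invoking the previous paragraph, $\bar\mu_U^*\L_\beta$ becomes the pullback of $\Det^*$ under the $\mu_{n,\beta}$-orbit map; but that pullback is, by the very construction of Theorem~\ref{loc:t:detbeta} and Proposition~\ref{loc:p:betaSGl}, the line bundle (bitorsor) ${\mathbb L}_{n,\beta}$ attached to $\widehat\SGl_{\C((z))}^{\beta}(V)$. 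This yields $\bar\mu_U^*\L_\beta\simeq{\mathbb L}_{n,\beta}$, and specialising to $\beta=0,1$ disposes of the two remaining determinant factors. I expect this to be the genuine obstacle: one must make the base-change and functoriality statements for the determinant functor precise, check that the relative Krichever map carries $\Det^*$ to $\L_\beta$ compatibly with the $\SGl_{\C((z))}(V)$-action, and verify throughout that the morphisms in the defining diagram remain inside the semistable locus $\U_g^{\infty}(n,d)_{\text{ss}}$.

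It remains to treat the Hodge factor. The composite $\SGl_{\C((z))}(V)\xrightarrow{\bar\mu_U}S\xrightarrow{p}\M_g^0$ factors through the group projection $\SGl_{\C((z))}(V)\xrightarrow{p}\G$ followed by the $\G$-orbit map into $\M_g^{\infty}$, since the curve underlying $\gamma(U)$ depends only on the semilinear component $g$ of $\gamma$ (compare the computation $A_{\gamma(W)}=g(A_W)$ in the proof of Theorem~\ref{thm:SGLgeneratesUinfty}). By the rank-one Mumford theory of \cite{MP2}, the pullback of the Hodge bundle $\lambda_1=\Det R^{\bullet}(p_{\Cc})_*\omega$ along the $\G$-orbit map is the Virasoro bitorsor $\Lambda_1$ associated with $\widehat\G_1$, whence $\bar\mu_U^*(p^*\lambda_1)\simeq p^*\Lambda_1$. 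Combining the three identifications, the Hodge factor $p^*\lambda_1^{\otimes 6n\beta(\beta-1)}$ of~(\ref{eq:GeomFormLonU}) (with the coefficient computed in the proof of Theorem~\ref{thm:GeometricFormulaLineBundles}) pulls back to $p^*\Lambda_1^{\otimes 6n\beta(\beta-1)}$, and~(\ref{eq:GeomFormLonU}) is thereby transformed into~(\ref{eq:TorsorsOnSGL}). As a consistency check, passing to the associated Lie-algebra cocycles returns exactly the relation $c_{n,\beta}=\beta c_{n,1}+(1-\beta)c_{n,0}+6n\beta(\beta-1)vir_1$ of Theorem~\ref{thm:LocalMumVB2}, the factor $n$ in the last term reflecting $vir_{n,1}=n\cdot vir_1$ (equation~(\ref{loc:e:vir_nbeta})).
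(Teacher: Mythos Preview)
Your proposal is correct and follows essentially the same strategy as the paper: reduce to the two identifications $\bar\mu_U^*\L_\beta\simeq{\mathbb L}_{n,\beta}$ and $\bar\mu_U^*(p^*\lambda_1)\simeq p^*\Lambda_1$, then realise the first by observing that $\L_\beta$ is the determinant bundle on the Grassmannian restricted along the $\beta$-twisted Krichever map (the paper calls this $\Kr_{n,\beta}$) and invoke base-change for the determinant. Your treatment is in fact more explicit than the paper's, which dispatches the Hodge factor with ``follows analogously''; your factorisation of $p\circ\bar\mu_U$ through the projection $\SGl_{\C((z))}(V)\to\G$ via the identity $A_{\gamma(W)}=g(A_W)$ is exactly the content behind that phrase.
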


\begin{proof}
With the above assumptions and notations, it suffices to prove that:
    $$
    \begin{gathered}
    \bar \mu_U^*\L_{\beta}\,\simeq \, {\mathbb L}_{n,\beta}
    \\
    \bar\mu_U^* p^* \lambda_{\beta} \, \simeq \,  \Lambda_{\beta}
    \, .
    \end{gathered}
    $$

Recall from subsection~\ref{loc:s:central} the definition of the sheaves ${\mathbb L}_{n,\beta}$ and $\Lambda_{n}$. Let us denote by $\Kr_{n,\beta}$ the modification of the Krichever map that sends $(X,x,z,E,\phi)$ to $H^0(X-\{x\}, E\otimes\omega_X^{\otimes\beta})$. Thus, from the commutative diagram:
    {\small $$
    \xymatrix@C=26pt{
    \SGl_{\C((z))}(V) \simeq \SGl_{\C((z))}(V) \times\{U\} \ar[r]^-{\mu_U} \ar[d]_{\bar\mu_U} &
    \U_g^{\infty}(n,d)   \ar@{^(->}[r]^-{\Kr_{n,\beta}}  \ar@{-->}[d] &  \Gr(V)
    \\
     S \ar[ru] \ar[r]_{\bar p}   & \U_{\Cc}(n,d)
    }$$}
and the base-change property of the determinant, it is straightforward to see that:
    $$
    {\mathbb L}_{n,\beta} \, \simeq \, \mu_U^*\Kr_{n,\beta}^*\Det \,\simeq \, \bar \mu_U^*\L_{\beta}
    \, .
    $$
The second isomorphism follows analogously and the Theorem is proved.
\end{proof}

\begin{remark}
In cases where a universal or Poincar\'{e} bundle exists on $\Cc\times_{\M_g^0} \U_{\Cc}(n,d)$ (see~\cite{MestranoRamanan}), one could repeat the above construction for $S= \U_{\Cc}(n,d)$ and $E$ the universal object. Accordingly, Theorem~\ref{thm:GeometricFormulaLineBundles} is indeed an identity that holds on the the Picard group of $\U_{\Cc}(n,d)$ (see~\cite{Kou} for some facts on generators of this group).
\end{remark}

\bibliographystyle{siam}
\bibliography{biblio}

\def\cprime{$'$} \def\cprime{$'$}
\begin{thebibliography}{10}

\bibitem{SGA7}
{\em Groupes de monodromie en g\'eom\'etrie alg\'ebrique. {I}}, Lecture Notes
  in Mathematics, Vol. 288, Springer-Verlag, Berlin, 1972.
\newblock S{\'e}minaire de G{\'e}om{\'e}trie Alg{\'e}brique du Bois-Marie
  1967--1969 (SGA 7 I), Dirig{\'e} par A. Grothendieck. Avec la collaboration
  de M. Raynaud et D. S. Rim.

\bibitem{ACKP}
{\sc E.~Arbarello, C.~De~Concini, V.~G. Kac, and C.~Procesi}, {\em Moduli
  spaces of curves and representation theory}, Comm. Math. Phys., 117 (1988),
  pp.~1--36.

\bibitem{AFMO}
{\sc H.~Awata, M.~Fukuma, Y.~Matsuo, and S.~Odake}, {\em Representation theory
  of the {$W_{1+\infty}$} algebra}, Progr. Theoret. Phys. Suppl.,  (1995),
  pp.~343--373.
\newblock Quantum field theory, integrable models and beyond (Kyoto, 1994).

\bibitem{BL}
{\sc A.~Beauville and Y.~Laszlo}, {\em Conformal blocks and generalized theta
  functions}, Comm. Math. Phys., 164 (1994), pp.~385--419.

\bibitem{BeilinsonSchectman}
{\sc A.~A. Be{\u\i}linson and V.~V. Schechtman}, {\em Determinant bundles and
  {V}irasoro algebras}, Comm. Math. Phys., 118 (1988), pp.~651--701.

\bibitem{BS}
\leavevmode\vrule height 2pt depth -1.6pt width 23pt, {\em Determinant bundles
  and {V}irasoro algebras}, Comm. Math. Phys., 118 (1988), pp.~651--701.

\bibitem{BiS}
{\sc I.~Biswas and G.~Schumacher}, {\em Determinant bundle, {Q}uillen metric,
  and {P}etersson-{W}eil form on moduli spaces}, Geom. Funct. Anal., 9 (1999),
  pp.~226--255.

\bibitem{Bri}
{\sc T.~Bridgeland}, {\em Fourier-{M}ukai transforms for surfaces and moduli
  spaces of stable sheaves}.
\newblock PhD Thesis, 1998.

\bibitem{BriMa}
{\sc T.~Bridgeland and A.~Maciocia}, {\em Fourier-{M}ukai transforms for {$K3$}
  and elliptic fibrations}, J. Algebraic Geom., 11 (2002), pp.~629--657.

\bibitem{SB}
{\sc K.~S. Brown}, {\em Cohomology of groups}, vol.~87 of Graduate Texts in
  Mathematics, Springer-Verlag, New York, 1994.
\newblock Corrected reprint of the 1982 original.

\bibitem{CD}
{\sc G.~Comp{\`e}re and S.~Detournay}, {\em Semi-classical central charge in
  topologically massive gravity}, Classical Quantum Gravity, 26 (2009),
  pp.~012001, 7.

\bibitem{FaRe}
{\sc G.~Falqui and C.~Reina}, {\em Conformal field theory and moduli spaces of
  vector bundles over variable {R}iemann surfaces}, in Differential geometric
  methods in theoretical physics ({R}apallo, 1990), vol.~375 of Lecture Notes
  in Phys., Springer, Berlin, 1991, pp.~209--218.

\bibitem{FrBen2}
{\sc E.~Frenkel and D.~Ben-Zvi}, {\em Vertex algebras and algebraic curves},
  vol.~88 of Mathematical Surveys and Monographs, American Mathematical
  Society, Providence, RI, second~ed., 2004.

\bibitem{FKRW}
{\sc E.~Frenkel, V.~Kac, A.~Radul, and W.~Wang}, {\em {$\mathcal W_{1+\infty}$}
  and {$\mathcal W(gl_N)$} with central charge {$N$}}, Comm. Math. Phys., 170
  (1995), pp.~337--357.

\bibitem{GO}
{\sc P.~Goddard and D.~Olive}, {\em Kac-{M}oody and {V}irasoro algebras in
  relation to quantum physics}, Internat. J. Modern Phys. A, 1 (1986),
  pp.~303--414.

\bibitem{EGAIV}
{\sc A.~Grothendieck}, {\em \'{E}l\'ements de g\'eom\'etrie alg\'ebrique. {IV}.
  \'{E}tude locale des sch\'emas et des morphismes de sch\'emas {IV}}, Inst.
  Hautes \'Etudes Sci. Publ. Math.,  (1967), p.~361.

\bibitem{KRad}
{\sc V.~Kac and A.~Radul}, {\em Representation theory of the vertex algebra
  {$W\sb {1+\infty}$}}, Transform. Groups, 1 (1996), pp.~41--70.

\bibitem{KPe}
{\sc V.~G. Kac and D.~H. Peterson}, {\em Spin and wedge representations of
  infinite-dimensional {L}ie algebras and groups}, Proc. Nat. Acad. Sci.
  U.S.A., 78 (1981), pp.~3308--3312.

\bibitem{KRa}
{\sc V.~G. Kac and A.~K. Raina}, {\em Bombay lectures on highest weight
  representations of infinite-dimensional {L}ie algebras}, vol.~2 of Advanced
  Series in Mathematical Physics, World Scientific Publishing Co. Inc.,
  Teaneck, NJ, 1987.

\bibitem{MR939049}
{\sc N.~Kawamoto, Y.~Namikawa, A.~Tsuchiya, and Y.~Yamada}, {\em Geometric
  realization of conformal field theory on {R}iemann surfaces}, Comm. Math.
  Phys., 116 (1988), pp.~247--308.

\bibitem{Kou}
{\sc A.~Kouvidakis}, {\em On the moduli space of vector bundles on the fibers
  of the universal curve}, J. Differential Geom., 37 (1993), pp.~505--522.

\bibitem{Li}
{\sc S.~Li}, {\em On the deformation theory of pair $(x,e)$}.
\newblock Preprint math.AG/0809.0344v1.

\bibitem{LW}
{\sc W.~Li and R.~L. Wilson}, {\em Central extensions of some {L}ie algebras},
  Proc. Amer. Math. Soc., 126 (1998), pp.~2569--2577.

\bibitem{MestranoRamanan}
{\sc N.~Mestrano and S.~Ramanan}, {\em Poincar\'e bundles for families of
  curves}, J. Reine Angew. Math., 362 (1985), pp.~169--178.

\bibitem{Muk}
{\sc S.~Mukai}, {\em Symplectic structure of the moduli space of sheaves on an
  abelian or {$K3$} surface}, Invent. Math., 77 (1984), pp.~101--116.

\bibitem{Mu1}
{\sc M.~Mulase}, {\em Cohomological structure in soliton equations and
  {J}acobian varieties}, J. Differential Geom., 19 (1984), pp.~403--430.

\bibitem{Mu}
\leavevmode\vrule height 2pt depth -1.6pt width 23pt, {\em Category of vector
  bundles on algebraic curves and infinite-dimensional {G}rassmannians},
  Internat. J. Math., 1 (1990), pp.~293--342.

\bibitem{Mum2}
{\sc D.~Mumford}, {\em Stability of projective varieties}, Enseignement Math.
  (2), 23 (1977), pp.~39--110.

\bibitem{MP1}
{\sc J.~M. Mu{\~n}oz~Porras and F.~J. Plaza~Mart{\'{\i}}n}, {\em Equations of
  the moduli of pointed curves in the infinite {G}rassmannian}, J. Differential
  Geom., 51 (1999), pp.~431--469.

\bibitem{MP2}
\leavevmode\vrule height 2pt depth -1.6pt width 23pt, {\em Automorphism group
  of {$k(\!(t)\!)$}: applications to the bosonic string}, Comm. Math. Phys.,
  216 (2001), pp.~609--634.

\bibitem{MP3}
\leavevmode\vrule height 2pt depth -1.6pt width 23pt, {\em Coverings with
  prescribed ramification and {V}irasoro groups}, Kyoto Journal of Mathematics,
  50 (2010).

\bibitem{PS}
{\sc A.~Pressley and G.~Segal}, {\em Loop groups}, Oxford Mathematical
  Monographs, The Clarendon Press Oxford University Press, New York, 1986.
\newblock , Oxford Science Publications.

\bibitem{Rad}
{\sc A.~O. Radul}, {\em Central extension of {L}ie algebra of differential
  operators on a circle and {$W$} algebras}, Pis\cprime ma Zh. \`Eksper.
  Teoret. Fiz., 50 (1989), pp.~341--343.

\bibitem{Sa}
{\sc M.~Sato and Y.~Sato}, {\em Soliton equations as dynamical systems on
  infinite-dimensional {G}rassmann manifold}, in Nonlinear partial differential
  equations in applied science (Tokyo, 1982), vol.~81 of North-Holland Math.
  Stud., North-Holland, Amsterdam, 1983, pp.~259--271.

\bibitem{Schli}
{\sc M.~Schlichenmaier}, {\em Local cocycles and central extensions for
  multipoint algebras of {K}richever-{N}ovikov type}, J. Reine Angew. Math.,
  559 (2003), pp.~53--94.

\bibitem{Sch}
{\sc M.~Schork}, {\em A note on families of {$bc$}-systems of higher rank}, J.
  Phys. A, 34 (2001), pp.~L53--L57.

\bibitem{SegalCFT}
{\sc G.~Segal}, {\em The definition of conformal field theory}.
\newblock Unpublished manuscript, 1990.

\bibitem{SW}
{\sc G.~Segal and G.~Wilson}, {\em Loop groups and equations of {K}d{V} type},
  Inst. Hautes \'Etudes Sci. Publ. Math.,  (1985), pp.~5--65.

\bibitem{Semi}
{\sc A.~M. Semikhatov}, {\em Solving {V}irasoro constraints on integrable
  hierarchies via the {K}ontsevich-{M}iwa transform}, Nuclear Phys. B, 386
  (1992), pp.~139--165.

\bibitem{Se3}
{\sc C.~S. Seshadri}, {\em Fibr\'es vectoriels sur les courbes alg\'ebriques},
  vol.~96 of Ast\'erisque, Soci\'et\'e Math\'ematique de France, Paris, 1982.
\newblock Notes written by J.-M. Drezet from a course at the \'Ecole Normale
  Sup\'erieure, June 1980.

\bibitem{Shiota}
{\sc T.~Shiota}, {\em Characterization of {J}acobian varieties in terms of
  soliton equations}, Invent. Math., 83 (1986), pp.~333--382.

\bibitem{Sim}
{\sc C.~T. Simpson}, {\em Moduli of representations of the fundamental group of
  a smooth projective variety. {I}}, Inst. Hautes \'Etudes Sci. Publ. Math.,
  (1994), pp.~47--129.

\bibitem{Wel}
{\sc G.~E. Welters}, {\em Polarized abelian varieties and the heat equations},
  Compositio Math., 49 (1983), pp.~173--194.

\end{thebibliography}
\end{document}